\newtheorem{theorem}{Theorem}
\newtheorem{lemma}[theorem]{Lemma}
\newtheorem{condition}[theorem]{Condition}
\theoremstyle{definition}
\newtheorem{definition}[theorem]{Definition}
\theoremstyle{remark}
\newcommand{\inner}[2]{\langle #1, #2\rangle}
\DeclareMathOperator{\im}{Im}
\DeclareMathOperator{\cone}{cone}
\begin{document}

\title[]{On the Image of Graph Distance Matrices }
\keywords{Graph Distance Matrix, Invertibility, Image}
\subjclass[2020]{05C12, 05C50} 

\thanks{}

\author[]{William Dudarov}
\author[]{Noah Feinberg}
\author[]{Raymond Guo}
\author[]{Ansel Goh}
\author[]{Andrea Ottolini}
\author[]{Alicia Stepin}
\author[]{Raghavendra Tripathi}
\author[]{Joia Zhang}

\address{Department of Mathematics, University of Washington, Seattle, WA 98195, USA}
\email{wdudarov@uw.edu}
\email{ndarwin@uw.edu}
\email{raymondpg@outlook.com}
\email{anselgoh@uw.edu}
\email{ottolini@uw.edu}
\email{astepin@uw.edu}
\email{raghavt@uw.edu}
\email{joiaz@uw.edu}

\begin{abstract} Let $G=(V,E)$ be a finite, simple, connected, combinatorial graph on $n$ vertices and let $D \in \mathbb{R}^{n \times n}$ be its graph distance matrix $D_{ij} = d(v_i, v_j)$. Steinerberger (J. Graph Theory, 2023) empirically observed that the linear system of equations $Dx = \mathbbm{1}$, where $\mathbbm{1} = (1,1,\dots, 1)^{T}$, very frequently has a solution (even in cases where $D$ is not invertible). The smallest nontrivial example of a graph where the linear system is not solvable are two graphs on 7 vertices. We prove that, in fact, counterexamples exists for all $n\geq 7$. The construction is somewhat delicate and further suggests that such examples are perhaps rare. We also prove that for Erd\H{o}s-R\'enyi random graphs the graph distance matrix $D$ is invertible with high probability. We conclude with some structural results on the Perron-Frobenius eigenvector for a distance matrix. 
\end{abstract}


\maketitle

\section{Introduction}
 Let $G=(V,E)$ be a finite, simple, connected, combinatorial graph on $|V|=n$ vertices. A naturally associated matrix with $G$ is the \emph{graph distance matrix} $D \in \mathbb{R}^{n \times n}$ such that $D_{ij}=d(v_i, v_j)$ is the distance between the vertex $v_i$ and $v_j$.
The matrix is symmetric, integer-valued and has zero on diagonals. The graph distance matrix has been extensively studied, we refer to the survey Aouchiche-Hansen~\cite{an}. The problem of characterizing graph distance matrices was studied in~\cite{hakimi1965distance}. A result of Graham-Pollack~\cite{graham} ensures that $D$ is invertible when the graph is a tree. Invertibility of graph distance matrix continues to receive attention and various extension of Graham-Pollack has been obtained in recent times~\cite{balaji2021inverse,balaji2022inverse,hao2022inverse,hou2016inverse,zhou2017inverse,bapat2011inverse}. However, one can easily construct graphs whose distance matrices are non-invertible. Thus, in general the graph distance matrix may exhibit complex behaviour.

Our motivation comes from an observation made by Steinerberger~\cite{stein} who observed that for a graph distance matrix $D$, the linear system of equations $Dx = \mathbbm{1}$, where $\mathbbm{1}$ is a column vector of all $1$ entries, tends to frequently have a solution--even when $D$ is not invertible. An illustrative piece of statistics is as follows. Among the
\begin{align*}
& 9969 \quad &&\mbox{connected graphs in Mathematica 13.2 with}~\#V \leq 100,\\
& 3877 \quad &&\mbox{have a non-invertible distance matrix}~\mbox{rank}(D) < n~\mbox{but only} \\
& 7 \quad &&\mbox{have the property that}~\mathbbm{1} \notin \mbox{image}(D).
\end{align*}
This is certainly curious. It could be interpreted in a couple of different ways. A first natural guess would be that the graphs implemented in Mathematica are presumably more interesting than `typical' graphs and are endowed with additional symmetries. For instance, it is clear that if $D$ is the distance matrix of a vertex-transitive graph (on more than $1$ vertices) then $Dx=\mathbbm{1}$ has a solution.  Another guess would be that this is implicitly some type of statement about the equilibrium measure on finite metric spaces. For instance, it is known~\cite{stein1} that the eigenvector corresponding to the largest eigenvalue of $D$ is positive (this follows from the Perron-Frobenius theorem) and very nearly constant in the sense of all the entries having a uniform lower bound. 
The sequence A354465~\cite{OEIS} in the OEIS lists the number of graphs on $n$ vertices with $\mathbbm{1} \notin \mbox{image}(D)$ as 
$$	1, 0, 0, 0, 0, 0, 2, 14, 398, 23923, \dots$$
where the first entry corresponds to the graph on a single vertex for which $D=(0)$. 
We see that the sequence is small when compared to the number of graphs but it is hard to predict a trend based on such little information. The first nontrivial counterexamples are given by two graphs on $n=7$ vertices.

\begin{center}
    \begin{figure}[h!]
    \begin{tikzpicture}
        \node at (0,0) {\includegraphics[width=0.35\textwidth]{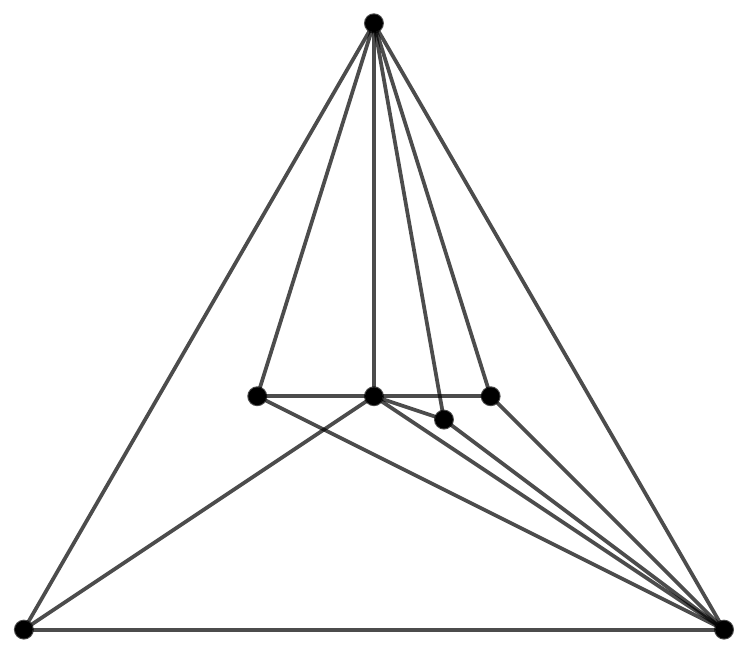}};
           \node at (5.5,0) {\includegraphics[width=0.35\textwidth]{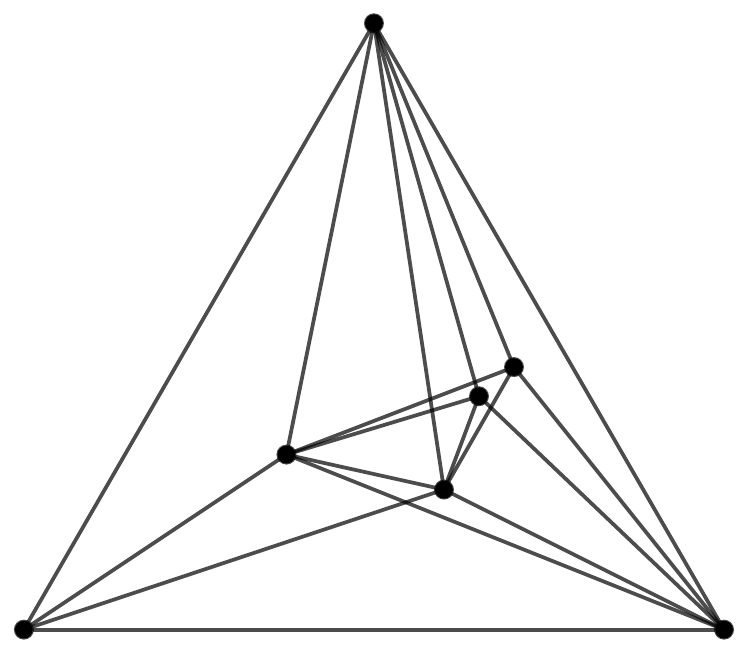}};
       \node[yshift=-2.25cm, xshift=-0.5cm] at (0,0) {(1,1,1,4)-complete 4-partite graph};
\node[yshift=-2.25cm, xshift=0.5cm] at (5.5,0) {(1,1,1,1,3)-complete k-partite graph};
    \end{tikzpicture}
    \caption{The two smallest graphs for which $\mathbbm{1} \notin \mbox{image}(D)$.}
    \label{fig:CounterExamples}
    \end{figure}
\end{center}

Lastly, it could also simply be a `small $n$' effect where the small examples behave in a way that is perhaps
not entirely representative of the asymptotic behavior. It is not inconceivable to imagine that the phenomenon disappears completely once $n$ is sufficiently large. We believe that understanding this is an interesting problem.

\subsection*{Acknowledgements}
This project was carried out under the umbrella of the Washington Experimental Mathematics Lab (WXML). The authors are grateful for useful conversations with Stefan Steinerberger. A.O. was supported by an AMS-Simons travel grant.
\section{Main Results}

\subsection{A plethora of examples}
Notice that the sequence A354465~\cite{OEIS} in the OEIS lists suggests that for $n\geq 7$ one can always find a graph on $n$ vertices for which $Dx=\mathbbm{1}$ does not have a solution. Here, we recall that $D$ represents the distance matrix of the graph, and $\mathbbm{1}$ represents a vector with all of its $|V|$ entries that are equal to one (we often omit the explicit dependence on $|V|$, when it is understood from the context). The main result of this section is the following. 
\begin{theorem}\label{thm:CE7}
    For each $n\geq 7$, there exists a graph $G$ on $n$ vertices such that $Dx=\mathbbm{1}$ does not have a solution. 
\end{theorem}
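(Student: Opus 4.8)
My plan is to start from the two known counterexamples on $7$ vertices --- the complete multipartite graphs $K_{1,1,1,4}$ and $K_{1,1,1,1,3}$ --- and to find a family of operations that take a graph $G$ on $n$ vertices with $\mathbbm{1}\notin\operatorname{image}(D)$ to a graph $G'$ on $n+1$ vertices with the same property. The most natural candidate is to keep the diameter equal to $2$: if $G$ has diameter $2$, then its distance matrix is $D=2(J-I)-A$ where $A$ is the adjacency matrix and $J$ the all-ones matrix, so $\mathbbm{1}\in\operatorname{image}(D)$ is a purely linear-algebraic condition on $A$. Within the class of diameter-$2$ graphs, adding a vertex adjacent to a carefully chosen subset $S$ of $V(G)$ changes $A$ in a controlled block way, and I would track how $\operatorname{rank}(D)$, $\operatorname{image}(D)$, and the position of $\mathbbm{1}$ relative to $\operatorname{image}(D)$ evolve.

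Concretely, the key steps would be: (i) For a diameter-$2$ graph, rewrite the solvability of $Dx=\mathbbm{1}$ using $D=2(J-I)-A$; since $(J-I)$ and $A$ interact through the common eigenvector structure when $G$ is regular (or near-regular), reduce the question to whether $\mathbbm{1}$ lies in the span of $A$'s eigenspaces in the right way, i.e. to a condition like $\mathbbm{1}^{T} A^{+}\mathbbm{1}$ or $\mathbbm{1}\in\operatorname{image}(2(J-I)-A)$ failing. (ii) Verify directly that the two $7$-vertex complete multipartite graphs satisfy this (this is a finite check, essentially recovering the OEIS data). (iii) Identify the right vertex-addition move: given $G$ on $n$ vertices with the bad property, add one new vertex $v$ joined to all of $V(G)$ except one vertex $w$ (so the new graph still has diameter $2$), or alternatively pass to a slightly larger complete multipartite graph by enlarging one part; then show the bad property is preserved. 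I expect complete multipartite graphs $K_{1,\dots,1,m}$ (with a controlled number of singleton parts) to form a clean infinite subfamily, because for them $A$, hence $D$, has very few distinct eigenvalues, so the relevant linear-algebra condition becomes an explicit arithmetic identity in $n$ and $m$ that one can solve for infinitely many $n$. Handling both parities of $n$ may require two such subfamilies, which is why one expects a second series like $K_{1,1,1,1,m}$ alongside $K_{1,1,1,m}$.

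The main obstacle, I expect, is \emph{not} producing some infinite family --- a single clean family of complete multipartite graphs should already give infinitely many $n$ --- but rather covering \emph{every} $n\geq 7$ with no gaps. A family like $K_{1,1,1,m}$ has $n=m+3$ and so hits all $n\geq 7$ only if the defining arithmetic condition holds for every $m\geq 4$, which may fail for congruence reasons; then one needs a second family (varying the number of singleton parts, e.g. $K_{1,\dots,1,m}$ with $k$ ones and $n=k+m$) and must check that together they leave no residue class uncovered. So the delicate part of the argument --- matching the paper's own remark that "the construction is somewhat delicate" --- is the bookkeeping: choosing, for each $n$, which complete multipartite (or diameter-$2$) graph to use, and verifying the non-solvability condition uniformly. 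I would organize the write-up as: a lemma reducing the problem to an explicit determinant/rank condition for complete multipartite graphs; a computation of that condition as a function of the part sizes; and a concluding case analysis on $n \bmod (\text{small modulus})$ selecting the appropriate graph in each case.
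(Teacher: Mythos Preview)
Your plan has a genuine gap at its most concrete step. For the complete multipartite graph $K_{\underbrace{1,\dots,1}_{k},m}$ with $k\geq 1$ singleton parts and one part of size $m\geq 2$, the distance matrix is
\[
D=\begin{pmatrix} J_k-I_k & J\\ J & 2(J_m-I_m)\end{pmatrix},
\]
and a short computation shows that every vector in $\ker D$ is of the form $(\alpha\,\mathbbm 1_k,\beta\,\mathbbm 1_m)^T$; solving the resulting $2\times 2$ linear system one finds $\ker D\neq\{0\}$ if and only if $(k-2)(m-2)=2$. The only positive integer solutions are $(k,m)=(3,4)$ and $(4,3)$, i.e.\ precisely the two seed graphs on $n=7$ vertices. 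For every other pair $(k,m)$ the matrix $D$ is invertible, so $\mathbbm{1}\in\operatorname{image}(D)$ automatically. Thus the ``clean infinite subfamily'' you expect collapses to the two base cases and cannot cover any $n\geq 8$; the arithmetic condition you anticipate does not hold for a single further value of $m$, let alone all of them. Your alternative inductive move---adding a vertex joined to all of $V(G)$ except one vertex $w$---is also unsafe: applied to $K_{1,1,1,4}$ with $w$ one of the singletons, it produces a pair of vertices each adjacent to everything except the other, and by Lemma~\ref{lem:twos} this \emph{forces} $\mathbbm{1}\in\operatorname{image}(D)$.

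The paper takes a structurally different route. Instead of trying to propagate the property $\mathbbm{1}\notin\operatorname{image}(D)$ through vertex additions, it proves a join criterion (Theorem~\ref{thm: join}): if $\widetilde D_G x=\mathbbm 1$ has no solution, then $D_{G+H}x=\mathbbm 1$ has no solution if and only if $\widetilde D_H x=\mathbbm 1$ admits a solution with $\langle x,\mathbbm 1\rangle=0$. Taking $G$ to be a single vertex (so $\widetilde D_G=(0)$ and the hypothesis is trivially met), the problem reduces to building, for each $n\geq 7$, an $(n-1)$-vertex graph $H$ possessing such a \emph{balanced} solution. The paper exhibits these explicitly: $H_k=C_k^c+K_k$ on $2k$ vertices, where $x=(\mathbbm 1_k,-\mathbbm 1_k)^T$ works by inspection, and a one-vertex modification $H_k'$ for the odd case. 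For $k\geq 4$ these graphs are not complete multipartite. The idea you are missing is that one should engineer the auxiliary condition $\langle x,\mathbbm 1\rangle=0$ on the join partner $H$, rather than attempt to preserve $\mathbbm 1\notin\operatorname{image}(D)$ directly under local moves.
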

Since we know that no counterexample exists for $n<7$, the result is sharp. Our approach to find many examples of graphs for which $Dx=\mathbbm{1}$ has no solutions is to prove some structural results (of independent interests) that show how to obtain bigger examples out of smaller ones. For a careful statement of such structural results, we will need some definitions. We start with the notion of graph join. 
\begin{definition}
    The graph join $G+H$ of two graphs $G$ and $H$ is a graph on the vertex set $V(G) \cup V(H)$ with edges connecting every vertex in $G$ with every vertex in $H$ along with the edges of graph $G$ and $H$.
\end{definition}
 \begin{figure}[ht]
\includegraphics[width = 3 in]{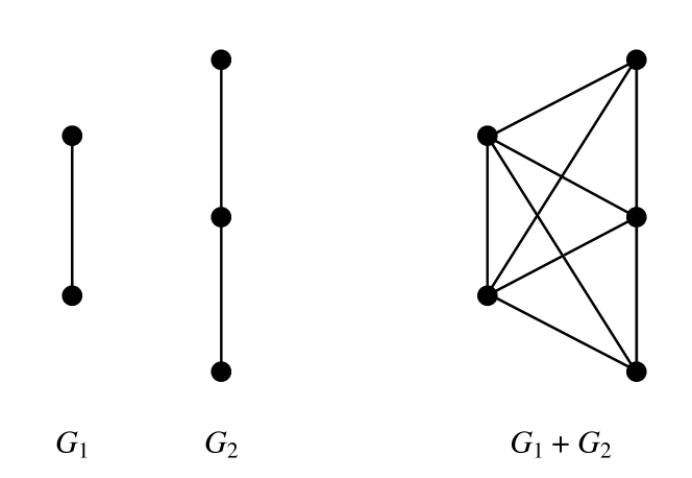}
\caption{The graph join of two paths.}
\label{graphjoin}
\end{figure}
Our structural result on the distance matrix of the graph join of two graphs is better phrased with the following definition.
\begin{definition}
    Let $G$ be a graph with adjacency matrix $A_G$. Then, define $\widetilde{D}_G = 2J-2I-A_G$. 
\end{definition}
Observe that for a graph of diameter $2$, $\widetilde{D}_G$ is the distance matrix, justifying this choice of notation. 
We now state the main ingredient in the proof of Theorem ~\ref{thm:CE7}.  
\begin{theorem}\label{thm: join}
    Let $G$ and $H$ be a graphs and suppose that $\widetilde{D}_G x=\mathbbm{1}$ has no solution. Then, the distance matrix $D$ of the graph join $G+H$  has no solution to $Dx=\mathbbm{1}$ if and only if there exists a solution to $\widetilde{D}_H x = \mathbbm{1}$ such that $\langle x, \mathbbm{1} \rangle = 0$. 
\end{theorem}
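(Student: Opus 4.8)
The plan is to put the distance matrix of the join into block form, translate $Dx=\mathbbm 1$ into a coupled pair of equations on the $G$- and $H$-blocks, and then run the two implications separately, using symmetry of $\widetilde D_G$ and $\widetilde D_H$ throughout. Assuming $G$ and $H$ are nonempty, in $G+H$ every vertex of $G$ is adjacent to every vertex of $H$, so the diameter is at most $2$; two vertices of $G$ are at distance $1$ if adjacent in $G$ and distance $2$ otherwise (route through any vertex of $H$), and symmetrically for $H$. Hence, writing $x=(y,z)^T$ with $y\in\mathbb R^{|V(G)|}$, $z\in\mathbb R^{|V(H)|}$, and using that an all-ones block acts by $Jz=\langle z,\mathbbm 1\rangle\mathbbm 1$,
\[
D=\begin{pmatrix}\widetilde D_G & J\\ J^T & \widetilde D_H\end{pmatrix},
\qquad
Dx=\mathbbm 1 \iff
\begin{cases}
\widetilde D_G\, y=(1-\langle z,\mathbbm 1\rangle)\mathbbm 1,\\[1mm]
\widetilde D_H\, z=(1-\langle y,\mathbbm 1\rangle)\mathbbm 1.
\end{cases}
\]
Two consequences of the standing hypothesis that $\widetilde D_G x=\mathbbm 1$ is unsolvable will be used repeatedly. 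First, in \emph{any} solution $(y,z)$ of $Dx=\mathbbm 1$ one must have $\langle z,\mathbbm 1\rangle=1$ and $y\in\ker\widetilde D_G$, since otherwise rescaling $y$ by $(1-\langle z,\mathbbm 1\rangle)^{-1}$ would solve $\widetilde D_G x=\mathbbm 1$. Second, because $\widetilde D_G$ is symmetric, $\operatorname{image}(\widetilde D_G)=(\ker\widetilde D_G)^\perp$, so $\mathbbm 1\notin\operatorname{image}(\widetilde D_G)$ forces the existence of $w\in\ker\widetilde D_G$ with $\langle w,\mathbbm 1\rangle\neq 0$; rescaling $w$ produces a kernel vector of $\widetilde D_G$ with any prescribed $\mathbbm 1$-sum. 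This single scalar degree of freedom is the engine of the construction in the $(\Rightarrow)$ direction.

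For $(\Leftarrow)$, suppose $z_0$ solves $\widetilde D_H z_0=\mathbbm 1$ with $\langle z_0,\mathbbm 1\rangle=0$, and assume for contradiction that $Dx=\mathbbm 1$ has a solution $(y,z)$. By the remarks above, $\langle z,\mathbbm 1\rangle=1$ and $\widetilde D_H z=(1-\langle y,\mathbbm 1\rangle)\mathbbm 1$. Pairing with $z_0$ and using symmetry of $\widetilde D_H$: on one hand $\langle z_0,\widetilde D_H z\rangle=(1-\langle y,\mathbbm 1\rangle)\langle z_0,\mathbbm 1\rangle=0$, on the other $\langle z_0,\widetilde D_H z\rangle=\langle \widetilde D_H z_0, z\rangle=\langle\mathbbm 1,z\rangle=1$, a contradiction. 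Hence $Dx=\mathbbm 1$ has no solution.

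For $(\Rightarrow)$, suppose $\widetilde D_H x=\mathbbm 1$ has no zero-sum solution; we construct a solution of $Dx=\mathbbm 1$ of the form $(y,z)$ with $y\in\ker\widetilde D_G$ and $\langle z,\mathbbm 1\rangle=1$, so that the top block equation is automatic. If $\widetilde D_H x=\mathbbm 1$ is unsolvable then, by symmetry of $\widetilde D_H$, $\mathbbm 1\notin(\ker\widetilde D_H)^\perp$, so we may pick $z\in\ker\widetilde D_H$ with $\langle z,\mathbbm 1\rangle=1$ and $y\in\ker\widetilde D_G$ with $\langle y,\mathbbm 1\rangle=1$; both block equations then read $\mathbbm 1=\mathbbm 1$. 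If instead $\widetilde D_H x=\mathbbm 1$ has a solution $z_1$, then $t:=\langle z_1,\mathbbm 1\rangle\neq 0$ by hypothesis, and since $\mathbbm 1\perp\ker\widetilde D_H$ in this case every solution has the same $\mathbbm 1$-sum $t$; set $z=z_1/t$ (so $\langle z,\mathbbm 1\rangle=1$ and $\widetilde D_H z=\tfrac1t\mathbbm 1$) and choose $y\in\ker\widetilde D_G$ with $\langle y,\mathbbm 1\rangle=1-\tfrac1t$, which is possible by the second remark above. The bottom block equation then reads $\langle y,\mathbbm 1\rangle\mathbbm 1+\widetilde D_H z=(1-\tfrac1t)\mathbbm 1+\tfrac1t\mathbbm 1=\mathbbm 1$, as required.

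I expect the main obstacle to be the bookkeeping in the $(\Rightarrow)$ direction: one must recognize that the hypothesis ``$\widetilde D_G x=\mathbbm 1$ unsolvable'' is precisely what, via symmetry, frees up the value of $\langle y,\mathbbm 1\rangle$, and then split on the solvability of $\widetilde D_H x=\mathbbm 1$ so that this one scalar can be tuned to balance the $H$-block (in the solvable case one additionally needs that all solutions share the same $\mathbbm 1$-sum, which again uses symmetry). The reverse direction and the initial block computation are routine once the block form and these two structural observations are in place.
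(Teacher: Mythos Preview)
Your proof is correct and follows essentially the same approach as the paper: the same block form for $D$, the same contrapositive split on solvability of $\widetilde D_H x=\mathbbm 1$ in the $(\Rightarrow)$ direction (with the same rescaling constructions), and the same use of $\operatorname{image}=(\ker)^\perp$ for symmetric matrices throughout. The one difference is that in the $(\Leftarrow)$ direction you replace the paper's two-case analysis by a single pairing computation $\langle z_0,\widetilde D_H z\rangle=\langle \widetilde D_H z_0,z\rangle$, which is a modest streamlining but not a genuinely different route.
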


 \begin{figure}[ht]
\includegraphics[width = 4 in]{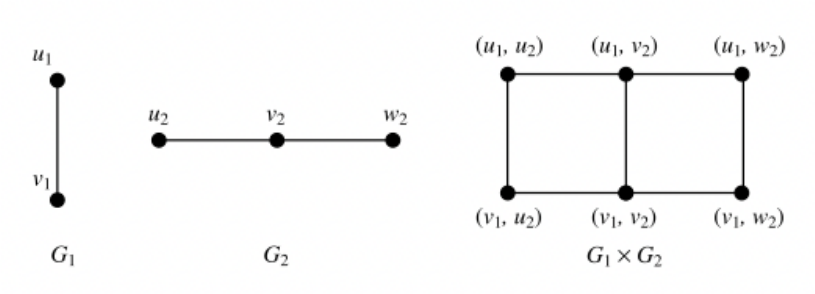}
\caption{The Cartesian product of two paths.}
\label{cartesianproduct}
\end{figure}
An alternative approach to the proof of Theorem \ref{thm:CE7}, that unfortunately does not allow for the same sharp conclusion (though it can be used to generate examples for infinitely many values of $n$) relies instead of the notion of Cartesian product.  
\begin{definition}
Given two graphs $G=(V_1, E_1)$ and $H=(V_2, E_2)$ their \emph{Cartesian product} $G \times H$ is a graph on the vertex set $V=V_1\times V_2$ such that there is an edge between vertices $(v_1,v_2)$ and $(v_1',v_2')$ if and only if either $v_1=v_1'$ and $v_2$ is adjacent to $v_2'$ in $H$ or $v_2=v_2'$ and $v_1$ is adjacent to $v_1'$ in $G$. 
\end{definition} 

\begin{theorem}\label{thm:CartesianProduct}
If $G$ and $H$ are graphs such that $\mathbbm{1}$ is not in the image of their distance matrices, then the Cartesian product graph $G \times H$ also has the property that $\mathbbm{1}$ is not in the image of its distance matrix.
\end{theorem}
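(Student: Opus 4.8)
The plan is to exploit the well-known additive structure of the distance function on a Cartesian product: if $G\times H$ has vertices $(u,x)$ with $u\in V(G)$, $x\in V(H)$, then $d_{G\times H}\big((u,x),(u',x')\big) = d_G(u,u') + d_H(x,x')$. In matrix terms, ordering the vertices of $G\times H$ lexicographically, the distance matrix of $G\times H$ is
\begin{equation*}
D_{G\times H} \;=\; D_G \otimes J_{|V(H)|} \;+\; J_{|V(G)|} \otimes D_H,
\end{equation*}
where $J_m$ is the $m\times m$ all-ones matrix and $\otimes$ is the Kronecker product. This is the key identity and the first thing I would record. From here the whole argument becomes a linear-algebra statement about this specific sum of Kronecker products, and one only has to track how the all-ones vector $\mathbbm{1}$ interacts with it, using $J_m \mathbbm{1}_m = m\,\mathbbm{1}_m$.

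Next I would argue by contrapositive: assume $\mathbbm{1}_{|V(G)||V(H)|}$ \emph{is} in the image of $D_{G\times H}$, say $D_{G\times H}\, z = \mathbbm{1}$ for some $z\in\mathbb{R}^{|V(G)||V(H)|}$, and deduce that $\mathbbm{1}$ lies in the image of $D_G$ (and symmetrically of $D_H$, though one direction suffices for the stated conclusion — actually we need to contradict the hypothesis on at least one factor, so deriving it for $G$ is enough). Write $z$ as a block vector $z = (z_1^T,\dots, z_{|V(G)|}^T)^T$ with each $z_i \in \mathbb{R}^{|V(H)|}$, and let $s_i = \langle \mathbbm{1}_{|V(H)|}, z_i\rangle$ and $s = (s_1,\dots, s_{|V(G)|})^T$, and let $w = \sum_i z_i \in \mathbb{R}^{|V(H)|}$. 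Pairing the equation $D_{G\times H} z = \mathbbm{1}$ appropriately: summing the $i$-th block against $\mathbbm{1}_{|V(H)|}$ collapses $J_{|V(H)|}$-terms to $|V(H)|$ times a scalar and collapses the $J_{|V(G)|}\otimes D_H$ part using $\mathbbm{1}^T D_H$; the upshot should be a clean relation of the form $|V(H)|\, D_G\, s + (\mathbbm{1}^T D_H w)\,\mathbbm{1}_{|V(G)|} = |V(H)|\,\mathbbm{1}_{|V(G)|}$, exhibiting $\mathbbm{1}_{|V(G)|}$ as a combination of $D_G s$ and $\mathbbm{1}_{|V(G)|}$ itself — which is circular unless one also extracts, from the same computation, that the scalar $c := \mathbbm{1}^T D_H w$ can be controlled. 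The honest way to close the loop is to also sum \emph{all} blocks of the equation: $\big(\mathbbm{1}^T D_G \mathbbm{1}\big)w + |V(G)|\, D_H w \cdot(\dots)$ — more precisely one gets a companion equation purely in the $H$-factor, and together the two say that if neither $D_G x = \alpha\mathbbm{1}$ nor $D_H y = \beta \mathbbm{1}$ is solvable for the relevant constants $\alpha,\beta$, no $z$ can exist. I would organize this as: (i) reduce to the two scalar/vector equations in the factors; (ii) observe that $\mathbbm{1}\in\mathrm{image}(D_G)$ and $D_G x = \lambda \mathbbm{1}$ solvable for some $\lambda\neq 0$ are equivalent (since if $D_G x = \mathbbm{1}$ is unsolvable then so is $D_G x = \lambda\mathbbm{1}$ for every $\lambda\neq 0$, while $\lambda = 0$ would force $\mathbbm{1}$ into the image only if it is already, as $D_G$ has zero diagonal and $\mathbbm{1}$ is never in the kernel-complement trivially — this needs a short check); (iii) conclude.

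The main obstacle I anticipate is exactly the bookkeeping in step (ii): disentangling the coupled system so that the non-solvability hypotheses on $D_G$ and $D_H$ — which are about the specific right-hand side $\mathbbm{1}$ — actually bite, rather than producing a vacuous identity. The cleanest route is probably to project the big equation onto the subspace $\{\mathbbm{1}_{|V(G)|}\}^{\perp}\otimes \mathbb{R}^{|V(H)|}$ and its complement separately, so that the two Kronecker terms partially decouple; on the orthogonal part $J_{|V(G)|}\otimes D_H$ contributes nothing, leaving $(D_G\otimes J_{|V(H)|})z = 0$ there, which forces each "$H$-average" $s_i$ to be constant in $i$ modulo $\ker D_G$, and on the parallel part one is left with an equation living on $H$ alone. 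I would need to verify that the constant obtained is nonzero — this is where connectedness and $n\ge 2$ enter — and that it cannot be rescaled away. Once the decoupling is set up correctly, each half is a one-line consequence of the hypothesis, and the symmetry between $G$ and $H$ makes the second half free.
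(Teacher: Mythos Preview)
Your strategy matches the paper's: write $D_{G\times H}=D_G\otimes J_{|V(H)|}+J_{|V(G)|}\otimes D_H$, assume a solution $z$ exists, and extract a contradiction from the block structure. The quantities $s$ and $w$ you introduce are exactly the right ones. Where you create trouble for yourself is in \emph{pairing} each block against $\mathbbm{1}_{|V(H)|}$: this collapses a vector identity in $\mathbb{R}^{|V(H)|}$ to a single scalar, throwing away precisely the information you then try to recover with companion equations and orthogonal projections.

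Simply read the $i$-th block of $D_{G\times H}\,z=\mathbbm{1}$ as a vector equation in $\mathbb{R}^{|V(H)|}$: it says
\[
(D_G s)_i\,\mathbbm{1}_{|V(H)|}\;+\;D_H w\;=\;\mathbbm{1}_{|V(H)|}.
\]
Since $D_H w$ does not depend on $i$, the scalar $(D_G s)_i$ is a constant $\gamma$; hence $D_G s=\gamma\,\mathbbm{1}_{|V(G)|}$ and $D_H w=(1-\gamma)\,\mathbbm{1}_{|V(H)|}$. One of $\gamma,\,1-\gamma$ is nonzero, and dividing by it yields a solution to $D_G x=\mathbbm{1}$ or $D_H y=\mathbbm{1}$, contradicting the hypothesis. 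This is exactly the paper's argument (Lemma~\ref{lem:productLemma}), stated there as: every block of $(J_m\otimes A)x$ equals $A(x_1+\cdots+x_m)$, while the blocks of $\mathbbm{1}-(B\otimes J_n)x$ are constant vectors, not all zero since $Bz=\mathbbm{1}$ is unsolvable. No projections, decoupling, or case analysis on kernels are needed; your observation in (ii) that solvability of $Dx=\mathbbm{1}$ and of $Dx=\lambda\mathbbm{1}$ for $\lambda\neq 0$ are equivalent is the only additional ingredient, and it is trivial.
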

We note that examples for which $Dx=\mathbbm{1}$ are not so easy to construct. In addition to the numerical evidence we provided in the introduction, we are able to give a rigorous, albeit partial, explanation of why this is the case (see Lemma \ref{lem:twos}).

\subsection{Erd\H{o}s-R\'enyi random graphs}
We conclude with a result about Erd\H{o}s-R\'enyi random graphs. We first recall their definition.
\begin{definition}
    An \textit{Erdos-Renyi} graph with parameters $(n,p)$ is a random graph on the labeled vertex set $V = \{v_1,v_2,...,v_n\}$ for which there is an edge between any pair $(v_i,v_j)$ of vertices with independent probability $p$.
\end{definition}
The following theorem shows that their distance matrices are invertible with high probability. 
As a consequence, $Dx=\mathbbm{1}$ has a solution for Erd\H{o}s-R\'enyi graphs with high probability, as we summarize in the following Theorem.

\begin{theorem}\label{thm:ErdosRenyiThm}
    Let $0 < p < 1$ and let $D_{n,p}$ be the (random) graph distance matrix associated of a random graph in $G(n,p)$. Then, as $n \rightarrow \infty$, 
    $$ \mathbb{P}\left( \det(D_{n,p}) = 0\right) \rightarrow 0.$$
\end{theorem}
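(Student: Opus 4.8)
The plan is to reduce to the (overwhelmingly likely) event that $G(n,p)$ has diameter at most $2$, on which the distance matrix is the explicit random matrix $2J-2I-A$, and then to appeal to the theory of singularity probabilities of random symmetric matrices.

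\emph{Step 1 (reduction to diameter two).} For a fixed pair $u\neq v$, failing to be joined by a path of length $\le 2$ requires $uv\notin E$ and no common neighbour, an event of probability $(1-p)(1-p^2)^{n-2}$; a union bound gives $\mathbb{P}(\operatorname{diam}(G(n,p))\ge 3)\le\binom{n}{2}(1-p^2)^{n-2}=o(1)$. On the complementary event $G$ is connected and, since every pairwise distance is $1$ or $2$, $D=\widetilde{D}_G=2J-2I-A$, so $\det(D_{n,p})=\det(2J-2I-A)$ there. Hence $\mathbb{P}(\det(D_{n,p})=0)\le\mathbb{P}(\operatorname{diam}\ge 3)+\mathbb{P}\bigl(\det(2J-2I-A)=0\bigr)$, and it suffices to show the last probability is $o(1)$, where now $A$ is simply the adjacency matrix of $G(n,p)$.

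\emph{Step 2 (a random symmetric matrix).} The matrix $M:=2J-2I-A$ is symmetric with zero diagonal, and its above-diagonal entries are independent, each equal to $2-\xi$ with $\xi\sim\operatorname{Bernoulli}(p)$ — a bounded, nondegenerate random variable supported on $\{1,2\}$. Writing $M=M_0+W$ with the fixed matrix $M_0:=(2-p)(J-I)$ and the centered symmetric matrix $W:=M-\mathbb{E}M$ (independent, bounded, variance-$p(1-p)$ entries above the diagonal, zero diagonal), one is in the setting of the results on the singularity of random symmetric matrices of Costello--Tao--Vu and their quantitative strengthenings by Vershynin, Nguyen, and Campos--Jenssen--Michelen--Sahasrabudhe: such a matrix is nonsingular with probability $1-o(1)$. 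These theorems are usually phrased for $\pm 1$ (hence centered) entries, but the mechanism behind them — lower bounding $\min_k\operatorname{dist}(R_k,H_k)$, with $R_k$ a row and $H_k$ the span of the others, via a (quadratic) Littlewood--Offord inequality — is insensitive both to the fixed additive shift $M_0$ and to replacing $\pm 1$ by any bounded nondegenerate entry distribution. This gives $\mathbb{P}(\det M=0)=o(1)$, and with Step 1 the theorem follows.

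\emph{Where the difficulty lies, and an alternate route.} To expose the role of $\mathbbm{1}$ and reduce to a one-parameter anti-concentration statement, condition on the subgraph on the first $n-1$ vertices: then $D=\begin{pmatrix}D'&c\\ c^{T}&0\end{pmatrix}$ with $D'=2J_{n-1}-2I_{n-1}-A'$ and $c=2\mathbbm{1}-b$, where $b\in\{0,1\}^{n-1}$ records the edges at $v_n$ and is independent of $A'$. On the event that $D'$ is invertible this gives $\det D=-\det(D')\,q(b)$ with $q(b)=(2\mathbbm{1}-b)^{T}(D')^{-1}(2\mathbbm{1}-b)$, a fixed quadratic polynomial in the independent $\operatorname{Bernoulli}(p)$ coordinates of $b$ with Hessian $2(D')^{-1}$ of full rank, so a quadratic Littlewood--Offord bound yields $\mathbb{P}(q(b)=0\mid A')=o(1)$. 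I expect the main obstacle to be twofold: one still needs ``$D'$ invertible with probability $1-o(1)$'', i.e. exactly the non-centered random-symmetric-matrix singularity input of Step 2 (used once, at size $n-1$, not recursively, so there is no circularity); and one must control the rank of the genuinely bilinear part of $q$ after using $b_i^2=b_i$, for which the cleanest remedy is to expose the last \emph{two} rows and invoke the bilinear Littlewood--Offord inequality of Costello--Tao--Vu, which needs only a rank lower bound on the resulting coefficient matrix $(D'')^{-1}$. However one proceeds, the single essential external ingredient is a singularity estimate for random symmetric matrices with a bounded nondegenerate entry distribution and an arbitrary fixed additive perturbation, and verifying that the standard arguments deliver this generality is where the real work lies.
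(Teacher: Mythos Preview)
Your proposal is correct and follows essentially the same route as the paper: reduce to the diameter-$2$ event (where $D=2J-2I-A$), then invoke a singularity/least-singular-value theorem for random symmetric matrices with a deterministic shift. The paper makes this last step a one-line citation of Nguyen's theorem (stated there as Theorem~\ref{big thm}), which is precisely the ``random symmetric matrix plus fixed perturbation'' result you identify as the essential external ingredient; your extended discussion of the Costello--Tao--Vu mechanism and the alternate two-row-exposure route is more than the paper supplies, but the underlying argument is the same.
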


It is a natural question to ask how quickly this convergence to $0$ happens. Our approach relies heavily on recent results \cite{ng} about the invertibility of a much larger class of random matrices with discrete entries, providing some explicit bounds that are likely to be loose. We propose a conjecture, which is reminiscent of work on the probability that a matrix with random $\pm 1$ Rademacher entries is singular, we refer to work of Koml{\'o}s~\cite{komlos} and the recent solution by Tikhomirov~\cite{tikh}.\\   One might be inclined to believe that the most likely way that $D_{n,p}$ can fail to be invertible is if two rows happen to be identical. This would happen if there are two vertices $v, w$ that are not connected by an edge which, for every other vertex $u \in V$, are both either connected to $u$ or not connected to $u$.
For a graph $G \in G(n,p)$ each vertex is connected to roughly $\sim np$ vertices and not connected to $\sim (1-p)n$ vertices. This motivates the following 

\begin{quote}
    \textbf{Question.} Is it true that
    $$ \lim_{n \rightarrow \infty}  \frac{ \log \left(  \mathbb{P}\left( \det(D_{n,p}) = 0\right) \right)}{n}  = \log\left( p^p (1-p)^{1-p} \right) \quad ?$$
\end{quote}

The right-hand side $\log\left( p^p (1-p)^{1-p} \right) = p \log{(p)} + (1-p) \log{(1-p)}$ is merely (up to constants) the \textit{entropy} of a Bernoulli random variable.

\subsection{Perron-Frobenius eigenvectors are nearly constant}
Let $(X, d)$ be a metric space and let $x_1, \ldots, x_n$ be $n$ distinct points in $X$. The notion of distance matrix naturally extends to this case. That is, we define $D\in \mathbb{R}^{n\times n}$ by setting $D_{ij}=d(x_i, x_j)$. This notion clearly agrees with the graph distance matrix if $X$ is a graph equipped with the usual shortest path metric. Let $\lambda_{D}$ be the Perron-Frobenious eigenvalue of $D$ and let $v$ be the corresponding eigenvector with non-negative entries. In the following we will always assume that $v$ is normalized to have $L^2$ norm $1$ unless otherwise stated. In \cite{stein1}, it was proved that
\[ \frac{\inner{v}{\mathbbm 1}}{\sqrt{n}}\geq \frac{1}{\sqrt{2}}
;.\]
It is also shown in~\cite{stein1} that the above inequality is sharp in general for the distance matrix in arbitrary metric space. However, it was observed that for graphs in the Mathematica database, the inner product tends to be very close to $1$, and it was not known if the lower bound of $1/\sqrt{2}$ is sharp for graphs. We show that this bound is sharp for graph distance matrices as well. The lower bound is achieved asymptotically by the \emph{Comet graph} that we define below. 

\begin{definition}
    We define a \emph{comet graph}, $C_{m_1}^{m_2}$, to be the disjoint union of a complete graph on $m_1$ vertices with the path graph on $m_2$ vertices and adding an edge between one end of the path graph and any vertex of the complete graph. 
    
\end{definition}
 \begin{figure}[ht]
\includegraphics[width = 3 in]{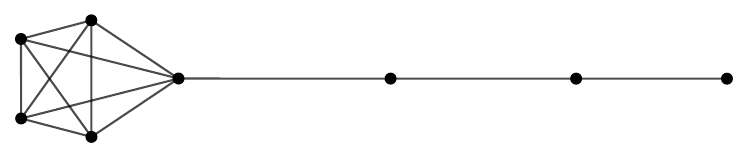}
\caption{The comet graph $C_5^3$}
\label{reduceedge}
\end{figure}

\begin{theorem}\label{thm:boundAttain}
   Let $D_m$ be the graph distance matrix of the Comet graph $C_{m^2}^m$. Let $v_m$ be the top eigenvector (normalized to have unit $L^2$ norm) of the distance matrix $D_m$. Then, 
    \[\lim_{m\to\infty} \frac{\inner{v_m}{\mathbbm 1}}{\sqrt{n}}=\frac{1}{\sqrt{2}}\;,\]
    where $n=m^2+m$ is the number of vertices in $C_{m^2}^m$. 
\end{theorem}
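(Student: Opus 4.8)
The plan is to leverage the known inequality $\inner{v_m}{\mathbbm 1}/\sqrt n\ge 1/\sqrt 2$ from \cite{stein1} (valid for all connected graphs): it is enough to prove the matching asymptotic upper bound $\limsup_{m\to\infty}\inner{v_m}{\mathbbm 1}/\sqrt n\le 1/\sqrt 2$. Write $c_1,\dots,c_{m^2}$ for the clique vertices of $C_{m^2}^m$, with $c_1$ the one attached to the path, and $p_1,\dots,p_m$ for the path vertices, so that $d(c_i,c_j)=1$ for $i\ne j$, $d(p_j,p_k)=|j-k|$, $d(c_1,p_\ell)=\ell$, and $d(c_i,p_\ell)=\ell+1$ for $i\ne 1$; in particular $\sum_{i=1}^{m^2} d(c_i,p_\ell)=m^2(\ell+1)-1$. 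The matrix $D_m$ has nonnegative entries and strictly positive off-diagonal, hence is irreducible, so $\lambda_m=\max_{\|w\|_2=1}w^T D_m w$, attained at the positive unit eigenvector $v_m$, and $v_m^T D_m v_m=\lambda_m$. I would split $v_m=(p;q)$ into its clique part $p\in\mathbb R^{m^2}$ and path part $q\in\mathbb R^m$ and set $s=\inner{p}{\mathbbm 1}$, $t=\inner{q}{\mathbbm 1}$, $\sigma=\sum_{\ell=1}^m\ell\,q_\ell$. Using that the clique block of $D_m$ equals $J-I$ and evaluating the clique--path block directly, one obtains the identity
\[
\lambda_m\;=\;2\sigma s\;+\;\Big(s^2-\|p\|^2+2st-2t\,p_{c_1}+q^T D_{pp}\,q\Big),
\]
where $D_{pp}$ is the distance matrix of $P_m$. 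Since $\|p\|,\|q\|\le 1$, $|s|\le m$, $|t|\le\sqrt m$, $|p_{c_1}|\le 1$ and the maximal row sum of $D_{pp}$ is $\binom{m}{2}$, every term in the parenthesis is $O(m^2)$, so $\lambda_m=2\sigma s+O(m^2)$.

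For the lower bound on $\lambda_m$, I would test against the vector $w$ that is $1$ on every clique vertex and $\gamma\ell$ on $p_\ell$, with $\gamma=\sqrt 3\,m^{-1/2}$. Then $\|w\|^2=m^2+\gamma^2\sum_{\ell=1}^m\ell^2=2m^2(1+o(1))$, and discarding the nonnegative clique--clique and path--path contributions to $w^T D_m w$ leaves
\[
w^T D_m w\;\ge\;2\gamma\sum_{\ell=1}^m \ell\big(m^2(\ell+1)-1\big)\;=\;\tfrac{2}{\sqrt3}\,m^{9/2}(1-o(1)),
\]
so $\lambda_m\ge w^T D_m w/\|w\|^2\ge \tfrac{1}{\sqrt3}\,m^{5/2}(1-o(1))$.

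For the upper bound, Cauchy--Schwarz gives $|\sigma|\le\big(\sum_{\ell=1}^m\ell^2\big)^{1/2}\|q\|=\tfrac{1}{\sqrt3}m^{3/2}\|q\|(1+o(1))$ and $|s|\le m\|p\|$, hence from $\lambda_m=2\sigma s+O(m^2)$ we get $\lambda_m\le \tfrac{2}{\sqrt3}m^{5/2}\|p\|\|q\|(1+o(1))+O(m^2)$. Dividing this and the lower bound by $\tfrac{1}{\sqrt3}m^{5/2}$ and comparing forces $\|p\|\|q\|\ge\tfrac12(1-o(1))$; since $\|p\|^2+\|q\|^2=1$ and $\|p\|\|q\|\le\tfrac12$ with equality only when $\|p\|^2=\|q\|^2=\tfrac12$, this yields $\|p\|^2\to\tfrac12$ (and, incidentally, $\lambda_m=\tfrac{1}{\sqrt3}m^{5/2}(1+o(1))$). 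Then $\inner{v_m}{\mathbbm 1}=s+t$ with $0\le s\le m\|p\|=\tfrac{m}{\sqrt2}(1+o(1))$ and $0\le t\le\sqrt m\,\|q\|=O(\sqrt m)$, so $\inner{v_m}{\mathbbm 1}\le\tfrac{m}{\sqrt2}(1+o(1))$; dividing by $\sqrt n=\sqrt{m^2+m}=m(1+o(1))$ gives $\limsup_m\inner{v_m}{\mathbbm 1}/\sqrt n\le 1/\sqrt2$, which together with \cite{stein1} proves the theorem.

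The crux, and the only genuinely delicate step, is that the constant $1/\sqrt3$ must emerge identically in the Rayleigh lower bound for $\lambda_m$ and in the Cauchy--Schwarz upper bound for $2\sigma s$: this is forced by the fact that Cauchy--Schwarz on $\sigma=\sum_\ell\ell q_\ell$ is saturated precisely by the linear path profile $q_\ell\propto \ell$ used in the test vector, but the $(1\pm o(1))$ factors must be tracked carefully so that the two bounds pin $\|p\|\|q\|$ to exactly $1/2$ rather than to some other constant. The remaining work --- checking that the $O(m^2)$ remainder in the identity, the $O(m)$ error in $\|w\|^2$, and the gap between $\sqrt n$ and $m$ are all lower order against $\lambda_m\asymp m^{5/2}$ and $\inner{v_m}{\mathbbm 1}\asymp m$ --- is routine bookkeeping.
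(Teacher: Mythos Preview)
Your proof is correct and takes a genuinely different route from the paper's. The paper works in the normalization $v_1=1$, exploits the automorphism of the clique to show the first $m^2-1$ entries of $v$ coincide, sandwiches $D_m$ between explicit matrices to get $\lambda_m\sim m^{5/2}/\sqrt{3}$, and then bootstraps: first $\|v\|_\infty=O(m)$ from \cite{stein1}, then $\|v\|_\infty=O(\sqrt m)$ by feeding this back into the eigenvalue equation, and finally the path increments $v_i-v_{i-1}\sim\sqrt{3/m}$. Summing these entrywise bounds yields $\langle\mathbbm 1,v\rangle\sim m^2$ and $\|v\|_2^2\sim 2m^2$. Your argument bypasses all entrywise analysis: you read off the Rayleigh identity $\lambda_m=2\sigma s+O(m^2)$ from the block structure, bound $2\sigma s\le \tfrac{2}{\sqrt3}m^{5/2}\|p\|\|q\|$ by Cauchy--Schwarz, and match this against the lower bound $\lambda_m\ge \tfrac{1}{\sqrt3}m^{5/2}(1-o(1))$ obtained from the test vector that simultaneously saturates both Cauchy--Schwarz inequalities (constant on the clique, linear on the path). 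The equality case of AM--GM then forces $\|p\|^2\to\tfrac12$, and $\langle v_m,\mathbbm 1\rangle=s+t\le m\|p\|+\sqrt m\,\|q\|$ finishes. Your approach is shorter and makes transparent \emph{why} the constant is $1/\sqrt2$ (equipartition of $L^2$ mass between clique and path); the paper's approach, in exchange, yields the finer structural information that the eigenvector grows linearly along the path.
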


While Theorem~\ref{thm:boundAttain} shows that the lower bound $1/\sqrt{2}$ is sharp, it does not reveal the complete truth. It is worth emphasizing that the lower bound is achieved only in the limit as the size of the graph goes to infinity. The following theorem shows that if a graph has diameter $2$ then, $\langle v, \mathbbm{1}\rangle/\sqrt{n}$ is significantly larger. 
\begin{theorem}\label{thm:strongerBound}
    Let $G$ be a graph with diameter $2$ and let $D$ be the distance matrix of $G$. Let $v$ be the top-eigenvector of $D$ normalized to have $L^2$ norm $1$. Then,
    \[\frac{\inner{v}{\mathbbm 1}}{\sqrt{n}}\geq \frac{4}{3}\cdot \frac{1}{\sqrt{2}}\;.\]
    
\end{theorem}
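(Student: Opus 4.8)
The plan is to exploit the special structure $D = \widetilde{D}_G = 2J - 2I - A_G$ available for diameter-$2$ graphs, and to analyze the Rayleigh quotient of $D$ at a judiciously chosen test vector in order to lower bound $\langle v, \mathbbm{1}\rangle$. Write $v$ for the normalized Perron–Frobenius eigenvector of $D$ with eigenvalue $\lambda = \lambda_D > 0$, and set $s = \langle v, \mathbbm{1}\rangle$ (so $s \geq \sqrt{n}/\sqrt{2}$ by \cite{stein1} and our goal is to push this to $\tfrac{4}{3}\cdot\tfrac{1}{\sqrt 2}\sqrt n$). Applying the eigen-equation to $\mathbbm{1}$ componentwise and summing, one gets $\lambda s = \langle D\mathbbm{1}, v\rangle = \langle \mathbbm{1}, D v\rangle$; more usefully, pairing $Dv = \lambda v$ with $\mathbbm{1}$ and with $v$ gives the two scalar identities $\lambda s = \mathbbm{1}^{T} D v$ and $\lambda = v^{T} D v$. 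Using $D = 2J - 2I - A$, we have $v^{T} D v = 2 s^{2} - 2 - v^{T} A v$ and $\mathbbm{1}^{T} D v = 2 n s - 2 s - \mathbbm{1}^{T} A v$. The key combinatorial input is that $A = A_G$ has nonnegative entries and zero diagonal, so $v^{T} A v \geq 0$ and, crucially, since $G$ is connected with $n \geq 2$ vertices, $\mathbbm{1}^{T} A v = \sum_i \deg(v_i) v_i \geq 0$ with quantitative control coming from $\deg(v_i)\geq 1$.

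The heart of the argument is to bound the "error" term $v^{T} A v$ from above by something controlled by $s$ and $n$. I would use the Perron–Frobenius eigenvector bound from \cite{stein1} in its pointwise form — that every entry of $v$ satisfies a uniform lower bound of order $1/\sqrt n$ — together with the constraint $\|v\|_2 = 1$, to control how much mass $A$ can concentrate. Concretely, from $\lambda v = Dv$ and $D = 2J-2I-A$ we get the pointwise identity $\lambda v_i = 2s - 2 v_i - (Av)_i$, i.e. $(Av)_i = 2s - (2+\lambda) v_i$. Summing the square of this, or summing against $v$, lets one eliminate $A$ entirely: $v^{T}Av = \sum_i v_i(Av)_i = 2 s^{2} - (2+\lambda)$, which combined with $v^{T}Dv = \lambda$ is just a consistency check, so instead I would sum against $\mathbbm{1}$: $\mathbbm{1}^{T} A v = 2ns - (2+\lambda)s$. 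Since $A$ is the adjacency matrix of a diameter-$2$ graph, each vertex is non-adjacent to at most $n-1$ vertices and adjacent to at least one, and the complement graph $\overline G$ also has diameter at most ... — this is where the diameter-$2$ hypothesis must be used more forcefully: $J - I - A$ is the adjacency matrix of $\overline G$, which has \emph{nonnegative} entries, so $v^{T}(J-I-A)v \geq 0$, giving $v^{T} A v \leq s^{2} - 1$. Feeding this into $\lambda = 2s^{2} - 2 - v^{T}Av$ yields $\lambda \geq s^{2} - 1$.

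With the inequality $\lambda \geq s^{2}-1$ in hand, the plan is to combine it with an \emph{upper} bound on $\lambda$ in terms of $s$ coming from the $\mathbbm{1}$-equation. From $\mathbbm{1}^{T} A v = 2ns - (2+\lambda)s$ and $\mathbbm{1}^{T} A v = v^{T}(J-I-\overline A)\cdot(\ldots)$ — more directly, from $(Av)_i = 2s-(2+\lambda)v_i \geq 0$ (entrywise, as $A$ has nonnegative entries and $v \geq 0$) we get $v_i \leq 2s/(2+\lambda)$ for every $i$; squaring and summing over $i$ gives $1 = \|v\|_2^2 \leq n \cdot 4 s^{2}/(2+\lambda)^{2}$, i.e. $(2+\lambda)^{2} \leq 4 n s^{2}$, hence $\lambda \leq 2\sqrt n\, s - 2$. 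Meanwhile, taking the inner product of $Dv=\lambda v$ with $\mathbbm{1}$ directly gives $\lambda s = \mathbbm{1}^T D v \leq \mathbbm{1}^T (2J) v = 2 n s$ trivially, which is too weak; the useful direction is to use $\lambda s = 2ns - 2s - \mathbbm{1}^{T}Av$ together with $\mathbbm{1}^{T} A v \geq \sum_i v_i = s$ (degree $\geq 1$), giving $\lambda s \leq 2ns - 3s$, i.e. $\lambda \leq 2n - 3$ — also not yet the bound. The real mechanism is to chain $s^{2} - 1 \leq \lambda$ with a Cauchy–Schwarz bound $\lambda = v^{T}Dv \leq \|D v\|_2 = \lambda$ (circular), so instead: use $\lambda s = \mathbbm{1}^T D v \leq \|\mathbbm 1\|_2 \|Dv\|_2 = \sqrt n\, \lambda$, giving only $s \leq \sqrt n$. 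Combining $\lambda \geq s^2-1$ with the \emph{sharp} spectral estimate $\lambda \leq \sqrt n \cdot \lambda / s$...

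The main obstacle, and the step I expect to require the most care, is precisely closing this loop with the right constant $4/3$: the naive estimates above give $s \geq $ something like $\sqrt{n}/\sqrt 2$ but not the improvement. I expect the correct route is to not discard $v^{T}Av$ but to bound it via \emph{both} $v^{T}Av \leq s^2 - 1$ (from $\overline G$ nonnegativity) \emph{and} $v^{T}Av = 2s^2 - 2 - \lambda$, then use a second moment / Cauchy–Schwarz estimate relating $\mathbbm 1^T A v$, $v^T A v$, and the maximum degree, together with $(Av)_i \le 2s - (2+\lambda)v_i$ to get $\|Av\|_2^2 = \sum_i (2s-(2+\lambda)v_i)^2 = 4ns^2 - 4(2+\lambda)s^2 + (2+\lambda)^2$; since also $\|Av\|_2^2 \le (\text{max degree}) \cdot \mathbbm 1^T A v \le (n-1)(2ns - (2+\lambda)s)$, one obtains a genuine quadratic inequality in the single variable $t := s/\sqrt n \in [1/\sqrt 2, 1]$ and $\mu := \lambda/n$, whose feasible region I would analyze to extract $t \ge \tfrac{4}{3\sqrt 2}$. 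I would carry this out by: (i) recording the exact identities $v^TDv=\lambda$, $\mathbbm 1^T D v = \lambda s$, $(Av)_i = 2s-(2+\lambda)v_i$; (ii) deriving $\lambda \ge s^2 - 1$ from nonnegativity of $A_{\overline G}$; (iii) deriving the reverse-type bound from $\|Av\|_2^2 \le \Delta \cdot \mathbbm 1^T A v$ with $\Delta \le n-1$; (iv) eliminating $\lambda$ between (ii) and (iii) to get a one-variable polynomial inequality for $t$; (v) checking the extremal value of the resulting inequality is $4/(3\sqrt2)$ and that equality forces a degenerate configuration, so the bound is strict for actual graphs. Step (iv)–(v) — getting the algebra to produce exactly $4/3$ rather than a weaker constant — is where I anticipate the delicacy, and it may force a smarter choice of the degree bound $\Delta$ (e.g. using that a diameter-$2$ graph with small $\Delta$ cannot be too large, since $n \le 1 + \Delta^2$).
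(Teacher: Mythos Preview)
Your proposal does not close. You correctly derive the pointwise identity $(Av)_i = 2s - (2+\lambda)v_i$ and the consequences $v_i \le 2s/(2+\lambda)$ (from $(Av)_i\ge 0$) and $\lambda \ge s^2-1$ (from $v^T(J-I-A)v\ge 0$), but your plan for turning these into the constant $4/(3\sqrt2)$ rests on the inequality $\|Av\|_2^2 \le \Delta\cdot \mathbbm 1^T Av$, which you do not justify and which is not true in general (it would require $(Av)_i \le \Delta$ uniformly, but the entries of $v$ are not bounded by $1$ in any useful way here). Even granting it with $\Delta \le n-1$, the resulting inequality is of the wrong order: after substituting $\|Av\|_2^2 = 4ns^2 - 4(2+\lambda)s^2 + (2+\lambda)^2$ and $\mathbbm 1^T Av = (2n-2-\lambda)s$, the right side is $\Theta(n^{5/2})$ while the left is $\Theta(n^2)$, so the constraint becomes vacuous for large $n$ and cannot pin down the constant. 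The Moore bound $n\le 1+\Delta^2$ only gives a \emph{lower} bound on $\Delta$, which is the wrong direction.

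The missing idea is much simpler than the moment machinery you are reaching for, and you already have the pieces. From $(Av)_i\ge 0$ and $((J-I-A)v)_i\ge 0$ you get the two-sided pointwise bound
\[
\frac{s}{\lambda+1}\ \le\ v_i\ \le\ \frac{2s}{\lambda+2}\ <\ \frac{2s}{\lambda+1},
\]
so in particular $v_i/v_j\le 2$ for all $i,j$. The paper then forgets everything about $D$ and simply minimizes $\langle v,\mathbbm 1\rangle/\sqrt n$ over the convex region $\{v:\|v\|_2=1,\ v_i/v_j\in[1/2,2]\}$. Since this is equivalent to maximizing the strictly convex function $\|v\|_2^2$ over the polytope $\{v:\|v\|_1=1,\ v_i/v_j\in[1/2,2]\}$, the extremum occurs at a vertex, i.e.\ at a vector whose entries take only the two values $c,2c$. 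With $m$ entries equal to $c$ and $n-m$ equal to $2c$, the normalization gives $c=(4n-3m)^{-1/2}$ and $\langle v,\mathbbm 1\rangle/\sqrt n = (2n-m)/\sqrt{n(4n-3m)}$, which is minimized at $m=2n/3$ with value $\tfrac{4}{3}\cdot\tfrac{1}{\sqrt2}$. Replace your steps (iii)--(v) with this elementary optimization and the proof is complete.
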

In the light of above theorem, it is reasonable to expect a more general result of the following form that we leave open.  
\begin{quote}
    \textbf{Problem.} Let $G$ be a graph on $n$ vertices with distance matrix $D$. Let $v$ be the top eigenvector of $D$ with unit $L^2$ norm. If $G$ has diameter $d$ then, 
    \[\frac{\langle v, \mathbbm{1}\rangle}{n}\geq \frac{1}{\sqrt{2}}(1+f(d))\;,\]
    for some $f$ such that $f(d)\to 0$ as $d\to \infty$. 
\end{quote}

\section{Proof of Theorem~\ref{thm:CE7}}
This section is dedicated to the proof of the main Theorem~\ref{thm:CE7}. Since the main ingredient is the structural result about the distance matrix of the graph join (Theorem \ref{thm: join}), we begin the section with the proof of that. 

\begin{proof}[Proof of Theorem \ref{thm: join}]
Observe that the distance matrix of $G+H$ is given by 
    \[D = \begin{pmatrix}
\widetilde{D}_G & J \\
J & \widetilde{D}_H
\end{pmatrix}.\]
Recall that the orthogonal complement of the kernel for a symmetric matrix is the image of the matrix because the kernel of a matrix is orthogonal to the row space, which in this case, is the column space. In particular, this applies to $\widetilde{D}_G$ and $\widetilde{D}_H$.

    To prove the forwards direction, we will show the contrapositive. We have two cases, namely the case where $\widetilde{D}_H x = \mathbbm{1}$ has no solution and the case where there is a solution to $\widetilde{D}_H x = \mathbbm{1}$ where  $\langle x, \mathbbm{1} \rangle \neq 0$
    
    First, assume that $\widetilde{D}_H x = \mathbbm{1}$ has no solution. Then, we have that $\ker \widetilde{D}_G \not \perp \mathbbm{1}$ and $\ker \widetilde{D}_H \not \perp \mathbbm{1}$ because $\mathbbm{1} \not \in \im \widetilde{D}_G$ and $\mathbbm{1} \not \in \im \widetilde{D}_H$. So, there exists $x_1 \in \ker \widetilde{D}_G$ and $x_2 \in \ker \widetilde{D}_H$ such that $\langle x_1, \mathbbm{1} \rangle = \langle x_2, \mathbbm{1} \rangle = 1$. Observe that the vector $x=(x_1, x_2)^T$ satisfies $Dx=\mathbbm{1}$ so we are done with this case. 

    Now, suppose that there exists $x$ such that $\widetilde{D}_H x = \mathbbm{1}$ and $\langle x, \mathbbm{1} \rangle \neq 0$. Then, let $x_2 = x/\langle x, \mathbbm{1} \rangle$. Once again, $\ker \widetilde{D}_G \not \perp \mathbbm{1}$ so there exists $x_1 \in \ker \widetilde{D}_G$ such that $\langle x_1, \mathbbm{1} \rangle = 1-1/\langle x, \mathbbm{1} \rangle$. Then, the vector $x=(x_1, x_2)^T$ satisfies $Dx=\mathbbm{1}$. Thus, we are done with this direction. 

    Now, for the reverse direction, suppose that there exists $y$ such that $\widetilde{D}_H y= \mathbbm{1}$ and $\langle y, \mathbbm{1} \rangle = 0$. 
    Assume for a contradiction that there exists a solution to $Dx=\mathbbm{1}$. Then, we have $x_1, x_2$ such that $\widetilde{D}_G x_1 + J x_2 = \mathbbm{1}$ and 
    $J x_1 + \widetilde{D}_H x_2= \mathbbm{1}$. 

    First, suppose that $\langle x_1, \mathbbm{1} \rangle = 1$. Then, we have $\widetilde{D}_H x_2=0$ so $x_2 \in \ker \widetilde{D}_H$. Note that $\mathbbm{1} \in \im \widetilde{D}_H$ so $\ker \widetilde{D}_H \perp \mathbbm{1}$. Thus, $\langle x_2, \mathbbm{1} \rangle = 0$, implying that $Jx_2=0$. However, this implies that $\widetilde{D}_Gx_1=\mathbbm{1}$, which is a contradiction. 

    Now, suppose that $\langle x_1, \mathbbm{1} \rangle \neq 1$. Then, $\widetilde{D}_H x_2 = c\mathbbm{1}$ for some $c\neq 0$. So, $x_2= y/c + z$ for some $z \in \ker \widetilde{D}_H$. Noting that $\ker \widetilde{D}_H \perp \mathbbm{1}$, we have $\langle x_2, \mathbbm{1} \rangle = \langle y, \mathbbm{1} \rangle/c = 0$. So, $Jx_2 = 0$ implying that $\widetilde{D}_Gx_1=\mathbbm{1}$, which is a contradiction. 
\end{proof}

Now, we will construct a family of graphs $\{H_n\}_{n=3}^\infty$ such that each $H_n$ has $2n$ vertices and there exists $x$ satisfying $\widetilde{D}_{H_n}x = \mathbbm{1}$ with $\langle x, \mathbbm{1} \rangle = 1$. 
First, we will define $\{H_n\}_{i=3}^\infty$.
\begin{definition}
    For each $n \geq 3$, define $H_n=C_n^c + K_n$, where $+$ is the graph join and $C_n^c$ is the complement of the cycle graph on $n$ vertices. 
\end{definition}

\begin{lemma}\label{even}
    For each $n\geq 3$, there exists $x$ satisfying $\widetilde{D}_{H_n}x = \mathbbm{1}$ with $\langle x, \mathbbm{1} \rangle = 0$. 
\end{lemma}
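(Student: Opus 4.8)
The plan is to compute $\widetilde{D}_{H_n}$ explicitly and exploit the symmetry of the construction. Since $H_n = C_n^c + K_n$ is a join, Theorem~\ref{thm: join}'s block description gives
\[
\widetilde{D}_{H_n} = \begin{pmatrix} \widetilde{D}_{C_n^c} & J \\ J & \widetilde{D}_{K_n}\end{pmatrix},
\]
where $\widetilde{D}_{K_n} = 2J - 2I - A_{K_n} = J - I$ (since $A_{K_n} = J-I$) and $\widetilde{D}_{C_n^c} = 2J - 2I - A_{C_n^c} = 2J - 2I - (J - I - A_{C_n}) = J - I + A_{C_n}$. Both blocks are built from $J$, $I$, and the circulant $A_{C_n}$, so the natural move is to look for a solution $x = (a\mathbbm{1}, b\mathbbm{1})^T$ with constant entries on each block, reducing the whole system to a $2\times 2$ linear system in $a,b$. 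Indeed $\widetilde{D}_{C_n^c}(a\mathbbm{1}) = a(n-1+2)\mathbbm{1} = a(n+1)\mathbbm{1}$ (using $A_{C_n}\mathbbm{1} = 2\mathbbm{1}$ and $J\mathbbm{1} = n\mathbbm{1}$), while $\widetilde{D}_{K_n}(b\mathbbm{1}) = b(n-1)\mathbbm{1}$ and $J(a\mathbbm{1}) = an\mathbbm{1}$, $J(b\mathbbm{1}) = bn\mathbbm{1}$.

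So the system $\widetilde{D}_{H_n}x = \mathbbm{1}$ collapses to
\[
a(n+1) + bn = 1, \qquad an + b(n-1) = 1,
\]
and the constraint $\langle x, \mathbbm{1}\rangle = 0$ becomes $na + nb = 0$, i.e.\ $b = -a$. Substituting $b = -a$ into the two equations gives $a(n+1) - an = 1$ and $an - a(n-1) = 1$, both of which reduce to $a = 1$. Hence $a = 1$, $b = -1$ solves all three equations simultaneously, and $x = (\mathbbm{1}, -\mathbbm{1})^T$ is the desired vector. The key point making this work is that the over-determined-looking $3$-equation system is consistent precisely because the two rows of the $2\times 2$ system differ by exactly the relation enforced by orthogonality to $\mathbbm{1}$.

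I expect essentially no obstacle here; the only thing to be careful about is the bookkeeping in reducing $\widetilde{D}_{C_n^c}$ to $J - I + A_{C_n}$ (equivalently, recording that $A_{C_n^c} = J - I - A_{C_n}$, valid for $n \geq 3$ where $C_n$ is a simple $2$-regular graph) and confirming $A_{C_n}\mathbbm{1} = 2\mathbbm{1}$, which needs $n \geq 3$ so that $C_n$ is genuinely a cycle. One should also note that $H_n$ has $2n$ vertices and diameter $2$ (every pair of vertices either lies in different parts of the join, hence adjacent, or lies in $K_n$, hence adjacent, or lies in $C_n^c$ and has a common neighbor in $K_n$), so that $\widetilde{D}_{H_n}$ is genuinely the distance matrix of $H_n$ — though for the statement of the lemma as written only the algebraic identity $\widetilde{D}_{H_n}x = \mathbbm{1}$ with $\langle x,\mathbbm{1}\rangle = 0$ is needed.
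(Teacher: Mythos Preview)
Your proof is correct and takes essentially the same approach as the paper: both identify the block structure of $\widetilde{D}_{H_n}$ and verify that $x=(\mathbbm{1}_n,-\mathbbm{1}_n)^T$ works. The only difference is cosmetic---you derive the vector via a constant-on-blocks ansatz and a $2\times 2$ reduction, whereas the paper simply writes down $x=(\mathbbm{1}_n,-\mathbbm{1}_n)^T$ and checks it directly.
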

\begin{proof}
To start, observe that $\widetilde{D}_{H_n}$ is of the form 
\[\begin{pmatrix}
B & J_n \\
J_n & J_n-I_n
\end{pmatrix}\]
where $B$ is defined by 
\[ B_{i,j}=\begin{cases} 
      0 & i=j \\
      2 & i=j \pm 1 \mod n\\
      1 & \text{otherwise}
   \end{cases}.
\]
The vector $x=(\mathbbm{1}_n,-\mathbbm{1}_n)^T$ satisfies $\widetilde{D}_{H_n}x=\mathbbm{1}$ with $\langle x,1 \rangle = 0$ so we are done. 
\end{proof}
Observe that each $H_i$ has an even number of vertices. We will now show construct a family of graphs $\{H_n'\}_{n=3}^\infty$ such that each $H_n'$ has $2n+1$ vertices. 

\begin{definition}
    For each $n \geq 3$, define $H_n'$ to be the graph formed by attaching one vertex to every vertex of $H_n$ except for one of the vertices of the $C_n^c$ component of $H_n$. 
\end{definition}

\begin{lemma}\label{odd}
    For each $n\geq 3$, there exists $x$ satisfying $\widetilde{D}_{H_n'}x = \mathbbm{1}$ with $\langle x, \mathbbm{1} \rangle = 0$. 
\end{lemma}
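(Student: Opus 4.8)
The plan is to mirror the proof of Lemma~\ref{even}, exhibiting an explicit vector in the kernel-shifted setup for $\widetilde{D}_{H_n'}$. First I would write out the structure of the distance matrix (equivalently $\widetilde{D}_{H_n'}$, since $H_n'$ has diameter $2$) in block form. The graph $H_n'$ consists of the $C_n^c$ block (call its vertex set $A$, with one distinguished vertex $a_0\in A$ to which the new vertex is \emph{not} attached), the $K_n$ block (vertex set $B$), and the single new vertex $w$. Within $A$ the matrix $B$ is as in Lemma~\ref{even} (entries $0$ on the diagonal, $2$ for cyclic neighbors, $1$ otherwise); within $B$ it is $J_n - I_n$; the $A$--$B$ block is $J$ since the join makes every vertex of $A$ adjacent to every vertex of $B$; the vertex $w$ is adjacent to every vertex of $B$ and to every vertex of $A$ except $a_0$, so its row is $1$ to all of $B$, $1$ to all of $A\setminus\{a_0\}$, and $\widetilde{D}_{ij} = 2 - (A_G)_{w a_0} = 2$ to $a_0$ (distance $2$ via any vertex of $B$).

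Next I would guess the kernel vector by perturbing the one from Lemma~\ref{even}. The natural ansatz is $x = (\mathbbm{1}_n + \alpha e_{a_0},\, -\mathbbm{1}_n + \beta\mathbbm{1}_n?,\, \gamma)$ — more precisely, take the $A$-coordinates to be $\mathbbm{1}_n$ except possibly a correction at $a_0$, the $B$-coordinates to be a constant $c\mathbbm{1}_n$, and the $w$-coordinate to be a scalar $t$, then solve the resulting small linear system for the unknowns $\alpha, c, t$ forced by the three types of rows (an $A$-row at a generic vertex, the $A$-row at $a_0$, a $B$-row, and the $w$-row), together with the constraint $\langle x, \mathbbm{1}\rangle = 0$. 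Because the cyclic structure of $B$ acts on the constant vector $\mathbbm{1}_n$ as multiplication by $n+2$ (each row of $B$ sums to $2+2+(n-3) = n+1$, wait: $0 + 2 + 2 + 1\cdot(n-3) = n+1$), these equations collapse to a handful of scalar identities that can be checked directly. I expect the honest answer to be a clean vector such as $x = (\mathbbm{1}_n, -\mathbbm{1}_n, 0)$ with a single coordinate adjusted, or a small rational combination; the ``$\langle x,\mathbbm{1}\rangle = 0$'' requirement is one extra scalar equation and there is one extra degree of freedom ($t$, the $w$-coordinate) compared to Lemma~\ref{even}, so the system should be solvable.

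The main obstacle is bookkeeping: the vertex $a_0$ breaks the symmetry, so the $A$-row at $a_0$ and the generic $A$-row give genuinely different equations, and one must track the distance from $w$ to $a_0$ (which is $2$, not $1$) correctly — an off-by-one here changes everything. Concretely, I would set $x = (u, v, t)$ with $u \in \mathbb{R}^A$, $v \in \mathbb{R}^B$, $t\in\mathbb{R}$, write the four scalar/vector equations
\begin{align*}
  B u + (\mathbbm{1}^\top v)\mathbbm{1}_n + t\,(\mathbbm{1}_n - e_{a_0}) &= \mathbbm{1}_n, \\
  (\mathbbm{1}^\top u)\mathbbm{1}_n + (J_n - I_n)v + t\,\mathbbm{1}_n &= \mathbbm{1}_n, \\
  (\mathbbm{1}_n - e_{a_0})^\top u + \mathbbm{1}^\top v &= 1,
\end{align*}
then use the ansatz $u = \mathbbm{1}_n + \alpha e_{a_0}$, $v = c\,\mathbbm{1}_n$ to reduce to scalar equations in $(\alpha, c, t)$ — exploiting $B\mathbbm{1}_n = (n+1)\mathbbm{1}_n$ and that $B e_{a_0}$ is the $a_0$-th column of $B$ — solve, and finally verify $\langle x,\mathbbm{1}\rangle = \mathbbm{1}^\top u + \mathbbm{1}^\top v + t = 0$. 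Once the scalars are pinned down the verification is a direct substitution, exactly as in Lemma~\ref{even}, and the lemma follows.
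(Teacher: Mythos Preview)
Your plan is the right one and matches the paper's approach---exhibit an explicit vector and verify---but you have a sign slip in the block equations and you are working harder than necessary. The column of $\widetilde{D}_{H_n'}$ indexed by $w$, restricted to the $A$-block, has entry $2$ at $a_0$ and $1$ elsewhere, i.e.\ it is $\mathbbm{1}_n + e_{a_0}$, not $\mathbbm{1}_n - e_{a_0}$; the vector $\mathbbm{1}_n - e_{a_0}$ is the \emph{adjacency} column, and $\widetilde{D} = 2J - 2I - A_G$ flips the defect to the other sign. The same correction applies to your $w$-row equation: it should read $(\mathbbm{1}_n + e_{a_0})^{\top} u + \mathbbm{1}^{\top} v + 0\cdot t = 1$.

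Once that is fixed, the ansatz with three unknowns $(\alpha,c,t)$ is overkill. The paper simply takes the vector from Lemma~\ref{even} and appends a zero: $x = (\mathbbm{1}_n, -\mathbbm{1}_n, 0)^{T}$. Because $t=0$, the new $w$-column contributes nothing to the first $2n$ rows, so those rows reduce exactly to $\widetilde{D}_{H_n}(\mathbbm{1}_n,-\mathbbm{1}_n)^{T} = \mathbbm{1}_{2n}$, already verified in Lemma~\ref{even}. The only new check is the $w$-row: $2\cdot 1 + 1\cdot(n-1) + 1\cdot n\cdot(-1) + 0 = 1$, and $\langle x,\mathbbm{1}\rangle = n - n + 0 = 0$ is immediate. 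So no linear system needs to be solved; your ``clean vector'' guess $(\mathbbm{1}_n,-\mathbbm{1}_n,0)$ is already the answer with no coordinate adjusted.
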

\begin{proof}
To start, observe that we can write $\widetilde{D}_{H_n'}$ as
\[\begin{pmatrix}
\widetilde{D}_{H_n}  \\
y 
\end{pmatrix}\]
where $y = (2,1, \dots, 1,0)$. 
Then, the vector $x=(\mathbbm{1}_n,-\mathbbm{1}_n,0)^T$ satisfies $\widetilde{D}_{H_n'}x=\mathbbm{1}$ with $\langle x,1 \rangle = 0$ so we are done. 
\end{proof}

Now, for sake of notation, we will recall the definition of the cone of a graph. 

\begin{definition}
    Given a graph $G$, the graph $\cone(G)$ is defined as the graph join of $G$ with the trivial graph. 
\end{definition}

\begin{proof}[Proof of Theorem~\ref{thm:CE7}]
    Take $G=\cone(H_{(n-1)/2})$ if $n$ is odd, and $G=\cone(H^{'}_{n/2-1})$ if $n$ is even. The proof is immediate from Theorem~\ref{thm: join}, Lemma~\ref{even} and Lemma~\ref{odd}.
\end{proof}

We now move to the proof of Theorem \ref{thm:CartesianProduct}, that allows for an alternative way of constructing graphs for which $Dx=\mathbbm{1}$ does not have a solution. To this aim, let $G$ and $H$ be two graphs on $n$ and $m$ vertices, respectively. Let $A\in \mathbb{R}^{n\times n}$ and $B\in \mathbb{R}^{m\times m}$ be the distance matrices of $G$ and $H$ respectively. It is well-known (see for instance~\cite[Corollary 1.35]{imrich2000product}, \cite[Lemma 1]{bap}) that the distance matrix of the Cartesian product $G\times H$ is given by $J_m \otimes A+ B \otimes J_n \in \mathbb{R}^{nm\times nm}$ where $\otimes$ is the Kronecker product and $J_{\ell}$ denotes $\ell\times \ell$ matrix with all $1$ entries. Theorem~\ref{thm:CartesianProduct} is an immediate consequence of the following Lemma~\ref{lem:productLemma}.  
\begin{lemma}\label{lem:productLemma}
Suppose that $A$ is a $n\times n$ matrix and $B$ is an $m \times m$ matrix such that the linear systems $Ay= \mathbbm{1}_{n}$ and $Bz=\mathbbm{1}_{m}$ have no solution. Then,  $$(J_m \otimes A+ B \otimes J_n)x=\mathbbm{1}_{nm}$$ has no solution. 
\end{lemma}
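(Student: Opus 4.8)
The plan is to argue by contradiction: assume $(J_m\otimes A + B\otimes J_n)x = \mathbbm{1}_{nm}$ has a solution $x$, and then extract from it a solution to $Ay=\mathbbm{1}_n$ or to $Bz=\mathbbm{1}_m$. The natural first step is to stop thinking of $x\in\mathbb{R}^{nm}$ as a single vector and instead write it as a block vector $x = (x_1,\dots,x_m)^T$ with each $x_k\in\mathbb{R}^n$, which is exactly the reshaping that makes Kronecker products transparent. With this bookkeeping, $(J_m\otimes A)x$ has $k$-th block $A\left(\sum_{j} x_j\right)$ — the same vector $A s$ in every block, where $s := \sum_{j=1}^m x_j$ — while $(B\otimes J_n)x$ has $k$-th block $\left(\sum_j B_{kj}\langle x_j,\mathbbm{1}_n\rangle\right)\mathbbm{1}_n$, i.e. a multiple of $\mathbbm{1}_n$ depending only on the scalars $c_j := \langle x_j,\mathbbm{1}_n\rangle$. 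So the hypothesis becomes the system of $m$ vector equations
\[
A s + (Bc)_k\,\mathbbm{1}_n = \mathbbm{1}_n, \qquad k=1,\dots,m,
\]
where $c=(c_1,\dots,c_m)^T$ and $(Bc)_k$ denotes the $k$-th entry of $Bc$.

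The second step is to read off what these equations force. Fix any $k$: then $A s = (1-(Bc)_k)\mathbbm{1}_n$, so $\mathbbm{1}_n \in \operatorname{im}(A)$ unless the scalar $1-(Bc)_k$ vanishes — but $Ay=\mathbbm{1}_n$ has no solution by hypothesis, so we must have $(Bc)_k = 1$ for every $k$, which is precisely the statement $Bc = \mathbbm{1}_m$. That already contradicts the hypothesis that $Bz=\mathbbm{1}_m$ has no solution, and we are done. (Note that this also forces $As=0$, but we do not even need that.)

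The only genuinely delicate point is the very first step: verifying the block description of $J_m\otimes A$ and $B\otimes J_n$ acting on a block vector. This is the standard identity $(P\otimes Q)\,\mathrm{vec}(\cdot)$-type computation and is routine, but one must be careful about the order of the Kronecker factors and hence about which index ranges over blocks versus within-block coordinates; getting this convention straight is what makes the rest collapse to two lines. I expect no other obstacle — once the reshaping is in place, the argument is purely a matter of observing that a nonzero multiple of $\mathbbm{1}_n$ lying in $\operatorname{im}(A)$ is forbidden, which pins every coordinate of $Bc$ to $1$. Finally, Theorem~\ref{thm:CartesianProduct} follows immediately by taking $A$ and $B$ to be the distance matrices of $G$ and $H$ and invoking the cited fact that the distance matrix of $G\times H$ is $J_m\otimes A + B\otimes J_n$.
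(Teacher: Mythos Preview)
Your proof is correct and follows essentially the same approach as the paper: both argue by contradiction, decompose $x$ into $m$ blocks, and observe that the $k$-th block equation reads $A(\sum_j x_j) = (1-(Bc)_k)\mathbbm{1}_n$ with $c_j=\langle x_j,\mathbbm{1}_n\rangle$. The only cosmetic difference is the order of the contradiction---the paper uses the hypothesis on $B$ first to find a block where the scalar is nonzero and then contradicts the hypothesis on $A$, while you use the hypothesis on $A$ to force $(Bc)_k=1$ for all $k$ and then contradict the hypothesis on $B$; these are logically equivalent.
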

\begin{proof}
Assume for the sake of contradiction that there exists $x\in \mathbb{R}^{nm\times nm}$ with \[(J_m \otimes A+ B \otimes J_n)x=\mathbbm{1}_{nm}.\] Then, we have
\[ (J_m \otimes A)x = \mathbbm{1}_{nm} - (B \otimes J_n)x = (c_1, \ldots, c_m)^{T}\;,\]
where each $c_i\in \mathbb{R}^{1\times n}$ is a vector with constant entries. Since $Bz=\mathbbm{1}_{m}$ has no solutions, there must be some $1\leq j\leq m$ for which $c_{j}=\alpha\mathbbm{1}_{n}$, where $\alpha\neq 0$.
Writing $x$ as the block vector $(x_1, ..., x_m)^T$ where each $x_i\in \mathbb{R}^{1\times n}$, we note that 
\[ A(x_1+\ldots+x_m) = c_i, \quad \forall 1\leq i\leq m\;.\]
In particular the above equation holds for $i=j$. Thus, we obtain $Ay=\mathbbm{1}_{n}$ for $y= (x_1+\dots+x_m)/\alpha$ which contradicts our assumption. \end{proof}

As we pointed out in Section $2$, while we have established that there are infinitely many graphs $G$ such that $Dx=\mathbbm{1}$ does not have a solution, finding such graphs can be hard. To illustrate this, we conclude this section with a structural result about family of graphs for which $Dx=\mathbbm{1}$ does have a solution. 
\begin{lemma}\label{lem:twos}
Let $G=(V, E)$ be a connected graph. Suppose there are two vertices $v,w\in V$ such that the following conditions hold.
\begin{enumerate}
    \item $v$ is not connected to $w$
    \item $v\sim x$ for every $x\in V\setminus\{w\}$
    \item $w\sim x$ for every $x\in V\setminus\{v\}$.
\end{enumerate}
If $D$ is the graph distance matrix of $G$ then $Dx=\mathbbm{1}$ has a solution. Furthermore, if there are two or more distinct pairs of vertices satisfying $1$-$3$ then $D$ is non-invertible. 
\end{lemma}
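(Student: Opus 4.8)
The plan is to work directly with the columns of $D$. Fix a pair $(v,w)$ satisfying conditions 1--3. Since $v\not\sim w$ while $G$ is connected, there must be a third vertex, and any such vertex is adjacent to both $v$ and $w$ by conditions 2--3, so $d(v,w)=2$. Combined with conditions 2--3, this determines the $v$-th and $w$-th columns of $D$ completely: writing $e_u$ for the standard basis vector indexed by $u\in V$, one gets $De_v=\mathbbm 1-e_v+e_w$ and, symmetrically, $De_w=\mathbbm 1-e_w+e_v$. Adding the two identities gives $D(e_v+e_w)=2\,\mathbbm 1$, so $x=\tfrac12(e_v+e_w)$ solves $Dx=\mathbbm 1$, proving the first assertion.

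For the second assertion I would first check that two distinct pairs $(v,w)$ and $(v',w')$ satisfying 1--3 must be vertex-disjoint. Indeed, if they shared a vertex, say $v=v'$ (the remaining overlap patterns are handled identically, using that conditions 1--3 are symmetric in the two vertices of a pair), then condition 2 for the first pair forces $v\sim w'$, whereas condition 1 for the second pair forbids this. Granting disjointness, the column computation above applies verbatim to the second pair and gives $D(e_{v'}+e_{w'})=2\,\mathbbm 1$ as well. Hence $\tfrac12(e_v+e_w)-\tfrac12(e_{v'}+e_{w'})$ lies in $\ker D$, and it is nonzero precisely because $\{v,w\}$ and $\{v',w'\}$ are disjoint; therefore $D$ is singular.

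I do not expect a genuine obstacle here: the computations are one line each, and the only point requiring a moment's care is the disjointness of two distinct pairs, which is exactly what makes the difference of the two solutions of $Dx=\mathbbm 1$ a nonzero kernel vector rather than $0$. As an aside, the same column identity shows that $e_v-e_w$ is a $(-2)$-eigenvector of $D$ for every admissible pair, which is what makes these configurations so rigid; but for the stated conclusion, exhibiting the explicit kernel vector is the most direct route, so that is the one I would take.
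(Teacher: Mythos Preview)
Your proof is correct and follows essentially the same route as the paper's: both exhibit the solution $\tfrac12(e_v+e_w)$ by computing the $v$- and $w$-columns of $D$, and both deduce singularity from the linear dependence $(e_v+e_w)-(e_{v'}+e_{w'})\in\ker D$. Your version is in fact slightly more careful: the paper tacitly writes the two pairs as four distinct vertices $v_1,v_2,v_3,v_4$, whereas you explicitly verify that two distinct admissible pairs must be vertex-disjoint before using that fact.
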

\begin{proof}
Observe that we can write the distance of $G$ such that the first two columns of $D$ are $(0,2, 1, \dots, 1)^T$ and $(2,0, 1, \dots, 1)^T$. Therefore $x=(1/2,1/2, 0, ..., 0)^T$ satisfies $Dx=\mathbbm{1}$. 
If there are two pair of vertices, say w.l.o.g $v_1, v_2$ and $v_3, v_4$ satisfying conditions $1$-$3$ then the first four columns of $D$ look like
\[\begin{pmatrix}
0 & 2 & 1 & 1 \\
2 & 0 & 1 & 1 \\
1 & 1 & 0 & 2 \\
1 & 1 & 2 & 0 \\
1 & 1 & 1 & 1 \\
\vdots & \vdots & \vdots & \vdots \\
1 & 1 & 1 & 1
\end{pmatrix}.\]
Labeling the columns $c_1,\dots, c_4$, we have $c_1+c_2-c_3=c_4$. $D$ must be singular.
\end{proof}

\section{Proof of Theorem~\ref{thm:ErdosRenyiThm}}

We start with the following well-known result (see, e.g., \cite{klee_larman_1981}) about the diameter of an Erd\H{o}s-R\'enyi graph.




\begin{lemma}\label{diam}
    Let $p\in (0, 1)$. Let $P_{p,n}$ be the probability that a random Erd\H{o}s-R\'enyi graph $G(n, p)$ has diameter at least $3$. Then, $\lim_{n\rightarrow \infty}P_{p,n} = 0$.
\end{lemma}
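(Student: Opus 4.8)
The plan is to prove the stronger statement that $G(n,p)$ has diameter at most $2$ with high probability, via a first-moment (union bound) argument on pairs of vertices. The key observation is combinatorial: a graph $G$ has diameter at most $2$ — adopting the convention that a disconnected graph has infinite diameter — precisely when every pair of non-adjacent vertices has a common neighbor. Indeed, if some non-adjacent pair $u,v$ has no common neighbor, then $d(u,v)\notin\{1,2\}$, so $d(u,v)\ge 3$ or $u,v$ lie in different components; conversely, if $G$ is disconnected or has diameter at least $3$ one immediately produces such a pair (two vertices in distinct components, or a pair realizing distance $3$). Hence $P_{p,n}$ equals the probability that there exist non-adjacent $u,v$ with $N(u)\cap N(v)=\emptyset$.

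Next I would estimate this probability for a single pair $\{u,v\}$. Conditioning on the (probability $1-p$) event that the edge $uv$ is absent, for each of the remaining $n-2$ vertices $w$ the event ``$w$ is adjacent to both $u$ and $v$'' has probability $p^{2}$, and these $n-2$ events, together with the absence of $uv$, are mutually independent since they involve pairwise disjoint sets of potential edges. Therefore
\[
\PP\bigl(u\not\sim v,\ N(u)\cap N(v)=\emptyset\bigr)=(1-p)\,(1-p^{2})^{\,n-2}\le (1-p^{2})^{\,n-2}.
\]

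Finally, a union bound over the $\binom{n}{2}$ unordered pairs yields
\[
P_{p,n}\ \le\ \binom{n}{2}(1-p^{2})^{\,n-2}\ \le\ n^{2}\,(1-p^{2})^{\,n-2},
\]
and since $0<p<1$ forces $0<1-p^{2}<1$, the right-hand side decays exponentially in $n$ and hence tends to $0$. This completes the argument.

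I do not expect a genuine obstacle here: this is a textbook second-neighborhood estimate. The only step deserving care is the first one — correctly matching the event $\{\mathrm{diam}(G)\ge 3\}$ with $\{\exists\,u\not\sim v:\ N(u)\cap N(v)=\emptyset\}$ and making sure disconnected graphs are not overlooked (they automatically satisfy the ``bad'' event, so the union bound still controls them). If one prefers to sidestep any convention about the diameter of a disconnected graph, one can instead quote the classical fact that $G(n,p)$ is connected with high probability for fixed $p$ and restrict the union-bound computation to that event.
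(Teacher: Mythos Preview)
Your argument is correct and is the standard first-moment proof of this classical fact. Note that the paper does not actually prove this lemma: it simply states it as well-known and cites the reference \cite{klee_larman_1981}. Your union-bound computation over pairs of vertices is exactly the textbook approach (and is essentially what one finds in the cited literature), so there is nothing to compare --- you have supplied the omitted details.
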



Let $I$ be the identity matrix, $J$ be the all-ones matrix, and $A$ be the graph's adjacency matrix. Owing to the Lemma \eqref{diam}, we can write, with high probability, the distance matrix as $D = 2J-A-2I$.
We will now state the following theorem from~\cite{ng}, which describes the smallest singular value $\sigma_n$ of a matrix $M_n=F_n+X_n$ where $F_n$ is a fixed matrix and $X_n$ is a random symmetric matrix
 under certain conditions. 
 \begin{condition}\label{cond}
    Assume that $\xi$ has zero mean, unit variance, and there exist positive constants $c_1<c_2$ and $c_3$ such that 
    \[\mathbb{P}(c_1 \leq \lvert\xi-\xi'\rvert \leq c_2)\geq c_3,\]
    where $\xi'$ is an independent copy of $\xi$
\end{condition}
\begin{theorem}\label{big thm}
    Assume that the upper diagonal entries of $x_{ij}$ are i.i.d copies of a random variable $\xi$ satisfying \ref{cond}. Assume also that the entries $f_{ij}$ of the symmetric matrix $F_n$ satisfy $\lvert f_{ij} \rvert \leq n^\gamma$ for some $\gamma > 0$. Then, for any $B>0$, there exists $A>0$ such that 
    \[\mathbb{P}(\sigma_n(M_n)\leq n^{-A})\leq n^{-B}.\]
\end{theorem}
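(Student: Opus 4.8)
The plan is to follow the now-standard geometric framework for bounding the smallest singular value $\sigma_n(M_n)=\inf_{v\in S^{n-1}}\|M_nv\|_2$ of a random matrix from below --- compressible/incompressible decomposition together with anti-concentration estimates, as developed by Rudelson--Vershynin for i.i.d.\ matrices and by Vershynin and Nguyen in the symmetric case --- observing that, since we only ask for the polynomial bound $\sigma_n(M_n)\ge n^{-A}$ (not the sharp order $n^{-1/2}$), every individual step may be run crudely, with $A$ simply the exponent that falls out at the end. As a preliminary reduction, note that $|f_{ij}|\le n^{\gamma}$ forces $\|M_n\|_{\mathrm{op}}\le n^{\gamma+1}+\|X_n\|_{\mathrm{op}}\le n^{\gamma+2}$ off an event of probability $o(n^{-B})$, so that all the nets, approximations and small-ball thresholds below can be taken at mutually compatible polynomial scales.

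\emph{Compressible directions.} Split $S^{n-1}$ into the compressible vectors --- those within $\ell^2$-distance $\rho$ of some $\delta n$-sparse unit vector --- and their complement. For a fixed sparse unit vector $u$ supported on a set $S$, the coordinates $(M_nu)_i$ with $i\notin S$ are independent, each of the form ``fixed shift plus a weighted sum of i.i.d.\ copies of $\xi$'', and Condition~\ref{cond} (anti-concentration being translation invariant) makes each of them avoid a fixed small neighbourhood of $0$ with constant probability; hence $\|M_nu\|_2\gtrsim n^{-O(1)}$ off an event of probability $e^{-\Omega(n)}$. Choosing the sparsity level $\delta$ small enough (one may need $\delta$ mildly sub-constant to absorb the $\log n$ in the net cardinality), the union bound over a polynomially fine net of compressible vectors survives, and approximating an arbitrary compressible vector by a net point together with the operator-norm bound transfers the estimate to the whole compressible set.

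\emph{Incompressible directions.} For incompressible $v$ one has $|v_k|\gtrsim n^{-1/2}$ on a positive fraction of coordinates $k$, and the elementary bound $\|M_nv\|_2\ge |v_k|\cdot\mathrm{dist}(C_k,H_k)$ --- with $C_k$ the $k$-th column of $M_n$ and $H_k$ the span of the remaining columns --- reduces, after averaging over $k$, the whole statement to an estimate of the form $\PP(\mathrm{dist}(C_k,H_k)\le n^{-A_0})\le n^{-B_0}$. The genuine obstacle, and the reason an off-the-shelf i.i.d.\ result does not apply, is that for a \emph{symmetric} matrix $C_k$ and $H_k$ are dependent. I would break this dependence by the Schur-complement/block decomposition: separating off the $k$-th row and column, $\mathrm{dist}(C_k,H_k)$ is bounded below by $|\langle X,w\rangle-c|$, where $X$ is the random part of the $k$-th column (independent of the rest of the matrix), $w$ is a unit normal to $H_k$ measurable with respect to the remaining $(n-1)\times(n-1)$ block, and $c$ is also measurable with respect to that block. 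Conditioning on the block, $\langle X,w\rangle$ is a constant-shifted weighted sum of i.i.d.\ copies of $\xi$, so by the inverse Littlewood--Offord theorem its concentration function at scale $t$ is $\lesssim t+1/D(w)$, where $D(w)$ is the essential least common denominator of $w$. It then remains to show that the random normal vector $w$ is \emph{unstructured}, $D(w)\ge n^{A_0}$, with probability $\ge 1-n^{-B_0}$: a structured $w$ would be an approximate null vector of the $(n-1)\times(n-1)$ block lying in the family of structured unit vectors, which admits a net of sub-exponential size, so a union bound over that family using the same small-ball estimate as in the compressible step makes the exceptional event negligible.

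I expect the incompressible case --- specifically the combination of decoupling the symmetry via the Schur complement and then proving that the normal vector to $H_k$ has large least common denominator with overwhelming probability --- to be the crux; this is where the inverse Littlewood--Offord / least-common-denominator machinery of~\cite{ng} is essential, whereas the compressible step and the distance reduction are routine. Throughout, the fixed matrix $F_n$ intervenes only as the deterministic shift $c$ (harmless, since all the anti-concentration inputs are translation invariant) and through the operator-norm bound $\|M_n\|_{\mathrm{op}}\le n^{O(1)}$, which is exactly why the hypothesis $|f_{ij}|\le n^{\gamma}$ --- and nothing stronger --- is precisely what is required. Propagating the (deliberately generous) polynomial losses through all the steps yields an admissible $A$ for each prescribed $B$.
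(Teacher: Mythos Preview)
The paper does not prove this theorem at all: it is quoted verbatim as a result from~\cite{ng} and used as a black box in the one-paragraph proof of Theorem~\ref{thm:ErdosRenyiThm}. So there is nothing to compare your proposal against in the present paper.

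That said, your sketch is a faithful outline of the strategy actually carried out in~\cite{ng}: compressible/incompressible splitting of the sphere, reduction of the incompressible case to a column--hyperplane distance, decoupling the symmetry via the $(n-1)\times(n-1)$ minor, and controlling the small-ball probability of $\langle X,w\rangle$ through the essential least common denominator of the normal vector $w$, together with the observation that the deterministic shift $F_n$ only enters through translation-invariant anti-concentration and the polynomial operator-norm bound. Your identification of the crux --- ruling out structured normal vectors $w$ with high probability --- is accurate. As a high-level plan this is correct; turning it into a complete proof requires the quantitative inverse Littlewood--Offord machinery developed in~\cite{ng}, which is well beyond the scope of the paper under review.
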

Combining all these results, we can prove the main result of the section.

\begin{proof}[Proof of Theorem~\ref{thm:ErdosRenyiThm}]
Owing to Lemma \ref{diam}, we can assume that with high probability the distance matrix has the form $D=2J-2A-2I$. Note that the upper diagonal entries of $A$ are i.i.d copies of a random variable satisfying Condition \ref{cond} with $c_1=c_3=1$ and $c_2=1$. Furthermore, $2(J-I)$ is symmetric and its entries are bounded. Therefore, the result follows from Theorem \ref{big thm}.

\end{proof}

\section{Proof of Theorem~\ref{thm:boundAttain}}
Let $D_m$ be the graph distance matrix of $C_{m^2}^m$. We start by observing that 
\[D_m=
\begin{bmatrix} 
    J_{m^2}-I_{m^2}&B_m\\
    (B_m)^\top& A_m
\end{bmatrix}\;,\]
where $A_m$ as a matrix $m\times m$ matrix such that $(A_m)_{ij}=|i-j|$ and $B_m$ is $m\times m$ matrix defined by
\[B_m=
\begin{bmatrix}
    2& 3 & \cdots&m+1\\
    \vdots & \vdots& \vdots & \vdots\\
    2& 3 &\cdots&m+1\\
    1& 2 &\cdots&m\\
\end{bmatrix}\]

Our first observation is that the first eigenvector of $D_m$ is constant for the first $m^2-1$ entries (considering the symmetry of the graph, this is not surprising).

\begin{lemma}
    Let $\lambda_m$ denote the largest eigenvalue of $D_m$ and let $v$ be the corresponding eigenvector. Then,    for all $i,j\leq m^2-1$, we have $v_i=v_j$.

\end{lemma}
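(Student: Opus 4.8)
The plan is to exploit the automorphism group of the comet graph $C_{m^2}^m$ to force the claimed equality. The key observation is that any permutation of the $m^2$ vertices of the complete-graph component that fixes the unique vertex $u$ attached to the path is a graph automorphism, since every pair of these vertices is at distance $1$ from each other and every such vertex other than $u$ has exactly the same distances to the path vertices (namely the distances recorded by the repeated rows of $B_m$). In other words, the symmetric group $S_{m^2-1}$ acting on the $m^2-1$ clique-vertices distinct from $u$ embeds into $\operatorname{Aut}(C_{m^2}^m)$, and hence each such permutation matrix $P$ (extended by the identity on the remaining $m+1$ coordinates) commutes with $D_m$: $PD_m = D_m P$.

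From here the argument is the standard Perron--Frobenius uniqueness argument. First I would recall that, since $D_m$ is a nonnegative irreducible symmetric matrix (irreducibility is immediate because the graph is connected and $D_m$ has strictly positive off-diagonal entries), the Perron--Frobenius theorem guarantees that the eigenspace for $\lambda_m$ is one-dimensional and is spanned by a vector $v$ with all entries strictly positive. Next, for any permutation matrix $P$ of the kind described above, $D_m(Pv) = P(D_m v) = \lambda_m (Pv)$, so $Pv$ is again a $\lambda_m$-eigenvector; by one-dimensionality $Pv = c v$ for some scalar $c$, and comparing $L^2$ norms (or simply noting $P$ is a permutation) gives $c = 1$, whence $Pv = v$. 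Since the transpositions $(i\ j)$ for $i,j \le m^2-1$ generate the relevant $S_{m^2-1}$, invariance under all of them forces $v_i = v_j$ for all $i,j \le m^2-1$.

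There is essentially no serious obstacle here; the only point requiring a little care is verifying that the transpositions of two clique-vertices (other than $u$) genuinely are automorphisms — i.e., that rows $i$ and $j$ of $D_m$ agree off the $\{i,j\}$ positions and that the $2\times 2$ block on $\{i,j\}$ is symmetric under the swap. This is exactly visible from the block form of $D_m$: the top-left block $J_{m^2}-I_{m^2}$ is invariant under any permutation of the clique coordinates, and the block $B_m$ has all rows equal except the last (which corresponds to $u$), so permuting the first $m^2-1$ clique rows leaves $B_m$ unchanged. I would write this verification out in one or two sentences and then invoke the Perron--Frobenius/commuting-symmetry argument above to conclude. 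A remark worth adding: the same reasoning shows $v$ is also constant on any orbit of $\operatorname{Aut}(C_{m^2}^m)$, but we only need the clique part for the statement, and indeed the path coordinates are genuinely not all equal, which is what makes the comet graph a good candidate for the sharp example in Theorem~\ref{thm:boundAttain}.
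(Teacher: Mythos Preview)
Your proof is correct but follows a different route from the paper. The paper argues directly from the rows of $D_m$: for $i,j\le m^2-1$ the rows $r_i$ and $r_j$ agree everywhere except in positions $i$ and $j$, where the diagonal zero forces $r_i-r_j=e_j-e_i$. Plugging into the eigenvalue relation gives $\lambda_m(v_i-v_j)=\langle r_i-r_j,v\rangle=-(v_i-v_j)$, and since $\lambda_m\ge 0$ this forces $v_i=v_j$. No appeal to Perron--Frobenius simplicity or to the automorphism group is needed; in fact the argument shows $v_i=v_j$ for \emph{any} eigenvector of $D_m$ whose eigenvalue is not $-1$.

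Your approach trades this short computation for a more conceptual symmetry argument: you identify the $S_{m^2-1}$ action on the clique vertices other than $u$ as graph automorphisms, then use Perron--Frobenius simplicity of $\lambda_m$ to conclude that the top eigenvector is fixed by this action. This is perfectly valid and has the advantage of yielding a general principle (the Perron eigenvector is constant on $\operatorname{Aut}(G)$-orbits), which you rightly note. The cost is that you must invoke irreducibility and simplicity, whereas the paper's argument needs only $\lambda_m\neq -1$ and applies to a larger class of eigenvectors.
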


\begin{proof}
    Let $r_i, r_j$ be $i$-th and $j$-th rows of $D$ respectively. We first note that $r_i-r_j=e_i-e_j$ for $i, j\leq m^2-1$. Now observe that
    \begin{align*}
        \lambda_m v_j-\lambda_m v_i
        &=\inner{r_j}{v}-\inner{r_i}{v}\\
        &=\inner{e_i-e_j}{v}=v_i-v_j\;.
    \end{align*}
    The conclusion follows since $\lambda_m\geq 0$.
\end{proof}

We start with an estimate for $\lambda_m$ that will later allow us to bound entries of $v$.

\begin{lemma}
    Let $\lambda_m$ be the largest eigenvalue of $D_m$ then
    $$\lambda_m = (1+o(1)) \cdot \frac{m^{5/2}}{\sqrt{3}}\;.$$
\end{lemma}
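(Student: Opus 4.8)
The plan is to estimate $\lambda_m$ via the Rayleigh quotient, using a well-chosen test vector for the lower bound and a norm/trace-type argument for the matching upper bound. The dominant block of $D_m$ is the $(m^2)\times(m^2)$ block $J_{m^2}-I_{m^2}$, whose top eigenvalue is $m^2-1$; however, the coupling block $B_m$ has entries of size $\Theta(m)$ and there are $m$ path-vertices, so the contribution of the path to a spread-out test vector scales like $m^2 \cdot m \cdot m = m^4$ in the numerator against a norm$^2$ of size $m^2$, giving a Rayleigh quotient of order $m^{5/2}$ — this already shows $\lambda_m \gg m^2$, so the clique-diagonal block is lower order and the eigenvalue is genuinely governed by the clique--path interaction.

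For the lower bound I would take a test vector $w$ that is equal to $a$ on the $m^2$ clique vertices and has some profile $b_1,\dots,b_m$ on the path vertices, optimize the quadratic form $\inner{D_m w}{w} = a^2(\mathbbm{1}^\top(J-I)\mathbbm{1}) + 2a\,\mathbbm{1}^\top B_m b + b^\top A_m b$ subject to $\|w\|^2 = m^2 a^2 + \|b\|^2 = 1$. Since $\mathbbm{1}^\top B_m$ is a vector whose $k$-th entry is $m\cdot k + O(1)$ (summing the $k$-th column of $B_m$ over its $m$ rows), we have $\mathbbm{1}^\top B_m b = m\sum_k k\, b_k + O(\|b\|_1)$, so the cross term is $\approx 2 a m \sum_k k b_k$, which for $b_k$ roughly linear in $k$ is of order $a m \cdot m \cdot m^2/\,$stuff; balancing $a \sim 1/m$ (so the clique part carries $O(1)$ mass) against $b$ supported on the far end of the path, one finds the optimal value is $(1+o(1)) m^{5/2}/\sqrt 3$. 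The cleanest route is: set $a = c/m$ and $b_k = k$, compute $\inner{D_m w}{w} = 2a m \cdot (\text{$\sum k^2 \approx m^3/3$}) + O(\text{lower order})$ and $\|w\|^2 = c^2 + \sum k^2 \approx c^2 + m^3/3$, then optimize over $c$; one should recover the constant $1/\sqrt3$ from the $\sum_{k=1}^m k^2 \sim m^3/3$ asymptotics, with the $A_m$ term $b^\top A_m b$ and the $J-I$ term $a^2 m^2(m^2-1)$ both being negligible relative to $m^{5/2}$ once $a\sim 1/m$.

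For the upper bound I would argue that any unit top eigenvector $v$ must look essentially like this test vector. By the previous lemma $v$ is constant, say equal to $\alpha$, on the first $m^2-1$ clique coordinates; a short argument (using the eigenvalue equation on the last clique coordinate, whose row differs from the others only in the $B_m$-part) shows the $m^2$-th coordinate is also $\alpha + o(\alpha)$ or at least that the clique part is $(1+o(1))\alpha\mathbbm{1}$. Writing $v = (\alpha \mathbbm{1}_{m^2} + \text{error}, u)$ with $u\in\mathbb R^m$ the path part, the eigenvalue equation $\lambda_m v = D_m v$ restricted to the clique block gives $\lambda_m \alpha = \alpha(m^2-1) + (B_m u)_i$ for each $i$, forcing $B_m u$ to be nearly constant; since the rows of $B_m$ are (up to the last row) all equal to $(2,3,\dots,m+1)$, this says $\lambda_m \alpha \approx \alpha m^2 + \inner{(2,\dots,m+1)}{u}$, so $\inner{(1,2,\dots,m)}{u} \approx \lambda_m \alpha$. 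Then the path block equation $\lambda_m u = B_m^\top (\alpha\mathbbm{1}) + A_m u = \alpha m (1,2,\dots,m)^\top + O(\alpha) + A_m u$ lets me solve for $u$ to leading order: $u \approx \frac{\alpha m}{\lambda_m}(1,2,\dots,m)^\top$ since $\|A_m\| = O(m^2) = o(\lambda_m)$. Substituting back into $\inner{(1,\dots,m)}{u}\approx\lambda_m\alpha$ yields $\frac{\alpha m}{\lambda_m}\sum_{k=1}^m k^2 \approx \lambda_m \alpha$, i.e. $\lambda_m^2 \approx m\sum_{k=1}^m k^2 \approx m^4/3$, whence $\lambda_m = (1+o(1))m^{5/2}/\sqrt3$. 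Throughout I would keep track of the error terms (the $O(\alpha)$ from the last row of $B_m$, the $A_m u$ term, the distinction between the $(m^2)$-th clique coordinate and the rest) and verify each is a factor $1/\sqrt m$ or smaller than the main term.

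The main obstacle I anticipate is making the upper bound fully rigorous: one needs a self-contained a priori bound of the form $\lambda_m = O(m^{5/2})$ (or at least $\lambda_m = \omega(m^2)$ and $\lambda_m = o(m^3)$) before the perturbative solving-for-$u$ step is justified, since otherwise one cannot assert $\|A_m\|/\lambda_m = o(1)$ or control the error vectors. This a priori bound should follow from $\|D_m\|_{\mathrm{op}} \le \|D_m\|_F = O(m^{5/2})$ on the one hand (the Frobenius norm is dominated by the $\Theta(m^3)$ entries of the clique block, each $\le 1$, giving $\|D_m\|_F \asymp m^2$ — wait, that gives only $m^2$, so one must instead use that the large entries are the $2m^2$ entries of $B_m$ of size up to $m+1$, contributing $\sum \asymp m^2\cdot m^2 = m^4$ to $\|D_m\|_F^2$, hence $\|D_m\|_F \asymp m^2$ as well; the $m^{5/2}$ growth comes from coherent alignment, not raw size, so the upper bound really does need the structural eigenvector argument rather than a crude norm bound) — so the honest approach is to use the eigenvector structure lemma to reduce to the effective $2\times 2$-type system above and bound the errors directly. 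Handling those error terms carefully, and in particular controlling the path-part $u$ in $\ell^2$ (not just entrywise), is where the real work lies.
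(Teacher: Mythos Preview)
Your approach has the right architecture but contains a dimensional error that breaks both halves of the argument. The off-diagonal block $B_m$ is $m^2\times m$ (one row for each clique vertex of $C_{m^2}^m$), not $m\times m$; the paper's ``$m\times m$'' is a typo, but the block decomposition $D_m=\begin{pmatrix}J_{m^2}-I_{m^2}&B_m\\B_m^\top&A_m\end{pmatrix}$ forces this. Consequently the $k$-th column sum of $B_m$ is $(m^2-1)(k+1)+k\sim m^2k$, not $mk$, and $B_m^\top\mathbbm{1}_{m^2}\sim m^2(1,2,\dots,m)^\top$, not $m(1,\dots,m)^\top$. With your factor of $m$ the self-consistency equation reads $\lambda_m^2\approx m\sum_{k=1}^m k^2\approx m^4/3$, which gives $\lambda_m\approx m^2/\sqrt3$, not $m^{5/2}/\sqrt3$ (your last sentence of that paragraph does not follow from the formula preceding it). With the correct factor $m^2$ one gets $\lambda_m^2\approx m^2\sum k^2\approx m^5/3$ and the scheme works; the Rayleigh lower bound, once corrected, already supplies the a priori $\lambda_m\gg m^2$ needed to discard the $A_m u$ term, resolving the circularity you flagged.

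The paper takes a cleaner route that sidesteps the bootstrap entirely. It sandwiches $D_m$ entrywise between a matrix $A$ that keeps only the clique--path interaction (with entries $k$ in place of $k{+}1$) and $A$ plus a block-diagonal correction with top eigenvalue $O(m^2)$. Since all matrices are nonnegative, Perron--Frobenius gives $\lambda_A\le\lambda_m\le\lambda_A+O(m^2)$. The top eigenvalue of $A$ is computed \emph{exactly}: its eigenvector is constant on the clique block and linear on the path block, yielding $\lambda_A^2=m^2\sum_{j=1}^m j^2=m^3(m+1)(2m+1)/6$. This delivers both bounds simultaneously with no perturbative step and no a priori estimate on $\lambda_m$.
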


\begin{proof}
Write $D = D_m$ and let $\lambda_m$ be as above.
    Let $A$ be the $m^2 + m$ by $m^2 + m$ matrix defined by
    \begin{equation}
        A_{i,j}=
        \begin{cases}
            i - m^2 & \text{if } i > m^2, j \leq m^2\\
            j - m^2 & \text{if } j > m^2, i \leq m^2 \\
            0 & \text{otherwise}\;.
    \end{cases}
    \end{equation}   
    Let $B$ be the $m^2+m$ by $m^2+m$ matrix defined by 
    \begin{equation}
            B_{i,j}=
            \begin{cases}
                1 & \text{if } i,j \leq m^2\\
                0 & \text{otherwise} \\
        \end{cases}\;.
    \end{equation}
       Let $C$ be the $m^2+m$ by $m^2+m$ matrix defined by   
    \begin{equation}
            C_{i,j}=
            \begin{cases}
                m+1 & \text{if } i,j > m^2\\
                0 & \text{otherwise} \\
        \end{cases}\;.
    \end{equation}  
    Note that
    \[A \leq D \leq A + B + C\]
    where the inequalities refer to entrywise inequalities. This means that for all $x \in \mathbb{R}^{m^2 + m}$ with nonnegative entries,
    \[x^TAx \leq x^TDx \leq x^T(A+B+C)x\]   
    Let $\lambda_A,\lambda_B,\lambda_C$ be the top eigenvalue of $A$, $B$, and $C$ respectively and let $\lambda_{A+B+C}$ be the top eigenvalue of $A+B+C$. Noting that $A,B,C$ are all symmetric nonnegative matrices, letting $S \subset \mathbb{R}^{m^2+m}$ be the subset of vectors with nonnegative entries such that $\|x\|_2\leq 1$. Then, 
    \[\lambda_A \leq \lambda_m \leq \lambda_{A+B+C}\leq \lambda_A+\lambda_B + \lambda_C\;.\] 
    It is easily seen that $\lambda_B = m^2$ and $\lambda_C = m(m+1)$. 
    We can also compute $\lambda_A$ explicitly. Let $v$ be the top eigenvector of $A$. Since the first $m^2$ rows and columns of $M$ are all identical, the first $m^2$ entries of $v$ are the same. Normalize $v$ so that the first $m^2$ entries are 1. Then $\lambda_Av = Dv$ yields
    \[\lambda_Av_1 = \lambda_A = \sum_{j=1}^m A_{1,j}v_{m^2+j} = \sum_{j=1}^m jv_{m^2+j}\]
    and for $1 \leq k \leq m$,
    \[\lambda_Av_{m^2+k} = \sum_{j=1}^{m^2}kv_j = \sum_{j=1}^{m^2} k = m^2k\;.\]
    Plugging $v_{m^2+k} = \frac{m^2k}{\lambda_A}$ into the first equation, we get \[\lambda_A^2 = \sum_{j=1}^m m^2j^2 = \frac{m^2(m)(m+1)(2m+1)}{6}\;.\]
    This yields,
    \[\sqrt{\frac{m^3(m+1)(2m+1)}{6}} \leq \lambda_m \leq \sqrt{\frac{m^3(m+1)(2m+1)}{6}} + m^2 + m(m+1)\;.\] 
\end{proof}

With this estimate in hand we can now show stronger bounds on $\|v\|_\infty$ than are directly implied by \cite{stein1} in the general case.
\begin{lemma}
    Let $v$ be the top eigenvector of $D_m$ normalized so that $v_1=1$ we have
    \[ \| v\|_\infty = \mathcal O(\sqrt{m})\]
\end{lemma}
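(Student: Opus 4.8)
### Proof Proposal

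The plan is to use the eigenvalue estimate $\lambda_m = (1+o(1))\,m^{5/2}/\sqrt{3}$ from the previous lemma together with the block structure of $D_m$ to bound the comet-tail entries of $v$. Recall that after normalizing $v_1=1$, the first $m^2-1$ entries of $v$ all equal $1$ (the earlier lemma on the constant block), so the only entries that could a priori be large are $v_{m^2}$ (the hub of the clique, the attachment point) and the $m$ tail entries $v_{m^2+1},\dots,v_{m^2+m}$. The idea is to write down the eigenvalue equation $\lambda_m v = D_m v$ row by row for these last $m+1$ coordinates and solve for them in terms of the known block of $1$'s, exactly as was done in the computation of $\lambda_A$.

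First I would extract the equation for a generic clique vertex $i \le m^2-1$: since row $i$ of $D_m$ has zeros on the diagonal, $1$'s across the rest of the clique block, and the entries $2,3,\dots,m+1$ into the tail (from the definition of $B_m$), we get
\[
\lambda_m = \lambda_m v_i = (m^2-1) + \big(v_{m^2}\text{-contribution}\big) + \sum_{k=1}^{m}(k+1)\,v_{m^2+k}.
\]
Wait — more carefully, for a non-hub clique vertex the distance to the hub is $1$ and to the $k$-th tail vertex is $k+1$, so the equation reads $\lambda_m = (m^2-2) + v_{m^2} + \sum_{k=1}^m (k+1) v_{m^2+k}$. Next I would write the equation for the $k$-th tail vertex: its distances are $k+1$ to each non-hub clique vertex, $k$ to the hub, $|k-j|$ to the $j$-th tail vertex. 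Since there are $m^2-1$ non-hub clique vertices each carrying value $1$, the dominant term is $(k+1)(m^2-1) \approx k m^2$, so $\lambda_m v_{m^2+k} = k m^2 + O(m^2) + (\text{tail-tail terms bounded by } \|v\|_\infty \cdot m^2)$. Dividing by $\lambda_m \sim m^{5/2}/\sqrt 3$, this gives $v_{m^2+k} = O(\sqrt m) + O(m^{-1/2}\|v\|_\infty)$ for every tail index, and similarly $v_{m^2} = \lambda_m^{-1}\big((m^2-1) + (\text{tail terms})\big) = O(m^{-1/2}) + O(m^{-1/2}\|v\|_\infty)$. Taking the maximum over all coordinates yields $\|v\|_\infty \le C\sqrt m + o(1)\,\|v\|_\infty$, which I then rearrange to conclude $\|v\|_\infty = O(\sqrt m)$.

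The one point requiring care — and the main obstacle — is the self-referential nature of the tail-tail interaction: the bound on $v_{m^2+k}$ involves $\sum_j |k-j| v_{m^2+j}$, which is itself controlled only by $\|v\|_\infty$ over the tail. The clean way to handle this is a bootstrap: first establish the crude a priori bound $\|v\|_\infty = O(\sqrt{n}) = O(m)$ (which follows from $\|v\|_2$-normalization arguments in \cite{stein1}, or directly since $v_1=1$ and $\lambda_m \ge$ a row sum forces each $v_i \lambda_m \le (\text{max row sum})\|v\|_\infty$), then feed $\|v\|_\infty = O(m)$ into the displayed estimates to get the tail-tail contribution $\sum_j |k-j| v_{m^2+j} = O(m^2 \cdot m) = O(m^3)$, and after dividing by $\lambda_m \sim m^{5/2}$ this is $O(m^{1/2})$ — already enough, so actually a single pass suffices once the a priori $O(m)$ bound is in place. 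I would double-check that the $m^{5/2}$ growth of $\lambda_m$ genuinely dominates all the $O(m^2)$ error terms from the clique block and the $O(m^3)$ tail-tail term after normalization, since that gap of $m^{1/2}$ is exactly what produces the $\sqrt m$ in the statement; this arithmetic is routine but is where the whole estimate lives.
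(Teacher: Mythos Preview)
Your argument is correct and is essentially the same as the paper's: both start from the a priori bound $\|v\|_\infty = O(m)$ (quoted from \cite{stein1}), split $(D_m v)_i$ into the clique block (where the $v_k$ equal $1$) and the remaining $m+1$ coordinates (where $v_k = O(m)$), observe that each piece is $O(m^3)$, and divide by $\lambda_m \sim m^{5/2}/\sqrt{3}$. The only difference is cosmetic: the paper uses the crude uniform entry bound $(D_m)_{ij}\le m+1$ to get $\lambda_m v_i \le m^2(m+1) + O(m)(m+1)^2 = O(m^3)$ in one line, whereas you track the exact row structure of $D_m$ before arriving at the same estimate.
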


\begin{proof}
    It follows from~\cite{stein1} that 
    $\| v\|_\infty = \mathcal O(m)$.
    when we have normalized $v$ such that $v_1=1$.  Since the first $m^2-1$ terms of $v$ are $1$ and the entries in $D$ are at most $(m+1)$ we get
    \begin{align*}
        \lambda_m v_i
        &=\sum_{k=1}^{m^2-1} (D_m)_{i,k}v_k
        +\sum_{k=m^2}^{m^2+m} (D_m)_{i,k}v_k\\
        &\leq m^2(m+1)+2m(m+1)^2=\mathcal O(m^3)\;.
    \end{align*}
    Since $\lambda_m\geq m^{5/2}/\sqrt{3}$, it follows that $v_i\leq \mathcal{O}(\sqrt{m})$.
\end{proof}

\begin{lemma}
    Let $v$ be as above. There exists $C>0$ such that for $i\geq m^2$, we have 
    \[\sqrt{\frac{1}{3m}}-\frac{C}{m}\leq (v_i-v_{i-1})\leq \sqrt{\frac{3}{m}}+\frac{C}{m}\;,\]
    for all sufficiently large $m$. 
\end{lemma}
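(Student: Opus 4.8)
The plan is to turn the eigenvector equation $D_m v=\lambda_m v$ into a first–order linear recurrence for the entries of $v$ along the path, in the same spirit as the earlier lemma showing that $v$ is constant on the clique. Recall from the preceding lemmas the normalization $v_1=\dots=v_{m^2-1}=1$, the bound $\|v\|_\infty=\mathcal O(\sqrt m)$, and the two–sided estimate
\[\sqrt{\frac{m^3(m+1)(2m+1)}{6}}\ \le\ \lambda_m\ \le\ \sqrt{\frac{m^3(m+1)(2m+1)}{6}}+m^2+m(m+1),\]
so that $\lambda_m=\frac{m^{5/2}}{\sqrt3}+\mathcal O(m^2)=\frac{m^{5/2}}{\sqrt3}\bigl(1+\mathcal O(m^{-1/2})\bigr)$. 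Write $w_k:=v_{m^2+k}$ for $0\le k\le m$ (so $w_0=v_{m^2}$); the claimed inequalities concern the successive differences $w_k-w_{k-1}$ with $1\le k\le m$. Since $v$ has nonnegative entries (Perron--Frobenius) and $\|v\|_\infty=\mathcal O(\sqrt m)$, each $w_k$ lies in $[0,\mathcal O(\sqrt m)]$; in particular $\sum_{\ell=1}^m w_\ell=\mathcal O(m^{3/2})$, and $T:=\langle\mathbbm 1,v\rangle=(m^2-1)+w_0+\sum_{\ell=1}^m w_\ell$ satisfies $m^2-1\le T\le m^2+\mathcal O(m^{3/2})$.

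The first step is to establish the recurrence. Fix $1\le k\le m$ and let $r_i$ denote the $i$-th row of $D_m$. A direct inspection of distances in the comet graph shows that moving from vertex $m^2+k-1$ to vertex $m^2+k$ raises the distance by exactly $1$ to every clique vertex and to each path vertex $m^2+1,\dots,m^2+k-1$, and lowers it by exactly $1$ to each path vertex $m^2+k,\dots,m^2+m$; that is,
\[ r_{m^2+k}-r_{m^2+k-1}=\mathbbm 1_{\{1,\dots,m^2+k-1\}}-\mathbbm 1_{\{m^2+k,\dots,m^2+m\}}.\]
Taking the inner product with $v$, using $\langle r_i,v\rangle=\lambda_m v_i$, and setting $P_k:=\sum_{\ell=k}^m w_\ell$, this gives
\[ \lambda_m\,(w_k-w_{k-1})=T-2P_k\qquad(1\le k\le m).\]

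From here the bounds are immediate. Because $v\ge 0$ we have $0\le P_k\le P_1=\sum_{\ell=1}^m w_\ell=\mathcal O(m^{3/2})$, so $m^2-\mathcal O(m^{3/2})\le T-2P_k\le m^2+\mathcal O(m^{3/2})$ uniformly in $k$, hence
\[ \frac{m^2-\mathcal O(m^{3/2})}{\lambda_m}\ \le\ w_k-w_{k-1}\ \le\ \frac{m^2+\mathcal O(m^{3/2})}{\lambda_m}.\]
Substituting $\lambda_m=\frac{m^{5/2}}{\sqrt3}\bigl(1+\mathcal O(m^{-1/2})\bigr)$ and expanding $(1+\mathcal O(m^{-1/2}))^{-1}=1+\mathcal O(m^{-1/2})$, both endpoints equal $\sqrt{\frac{3}{m}}+\mathcal O(1/m)$, so there is a constant $C>0$ with $\sqrt{\frac{3}{m}}-\frac Cm\le w_k-w_{k-1}\le\sqrt{\frac{3}{m}}+\frac Cm$ for all large $m$; the claimed inequalities then follow at once since $\sqrt{\frac{3}{m}}\ge\sqrt{\frac{1}{3m}}$.

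The one genuinely delicate point is the row–difference identity: it rests on a short case analysis (a clique vertex; the clique vertex attached to the path; a path vertex strictly before $k$; the vertex $k$ itself; a path vertex after $k$), and its content is precisely that the comet's path hangs off the clique in the simplest possible way. Beyond that, the argument needs only that the tail sums $P_k$ are of size $\mathcal O(m^{3/2})$ — which must \emph{not} be discarded, since they are exactly what produces the $C/m$ error term — together with the sharp leading constant $1/\sqrt3$ in the asymptotics of $\lambda_m$, already available from the previous lemma. No new spectral estimate is required; note also that the stated lower bound $\sqrt{1/(3m)}$ is far from tight (the true size of every difference is $\sim\sqrt{3/m}$), which leaves generous slack on that side.
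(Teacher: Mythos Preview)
Your proof is correct and follows essentially the same approach as the paper: both compute the row difference $r_i-r_{i-1}$ along the path, take the inner product with $v$ to obtain $\lambda_m(v_i-v_{i-1})=(m^2-1)+O(m^{3/2})$ via the $\|v\|_\infty=\mathcal O(\sqrt m)$ bound, and then divide by $\lambda_m\sim m^{5/2}/\sqrt3$. Your formulation $\lambda_m(w_k-w_{k-1})=T-2P_k$ is just a repackaging of the paper's sum, and your observation that both endpoints are actually $\sqrt{3/m}+O(1/m)$ (so the stated lower bound $\sqrt{1/(3m)}$ is loose) is accurate.
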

\begin{proof}
For $i\geq m^2$ we consider the following difference $r_i-r_{i-1}$. Observe that first $i-1$ coordinates are $1$ followed by $n+m+1-i$ many $-1$. Therefore, 
\begin{align*}
\lambda(v_i-v_{i-1})&=(D_mv)_i-(D_mv)_{i-1}=\inner{r_i-r_{i-1}}{v}\\
&=\sum_{k=1}^{i-1} v_i-\sum_{k=i}^{m^2+m} v_i
=(m^2-1)+\sum_{k=m^2}^{i-1} v_i-\sum_{k=i}^{m^2+m} v_i\;.
\end{align*}
Using the fact that $v_i\leq C\sqrt{m}$ for all $i$ we obtain 
\begin{align*}
    m^2-1-Cm^{3/2}\leq \lambda(v_i-v_{i-1})\leq m^2-1+Cm^{3/2}\;.
\end{align*}
Since $\lambda_m\sim m^{5/2}/\sqrt{3}$, the desired conclusion follows.
\end{proof}

\begin{proof}[Proof of Theorem~\ref{thm:boundAttain}]
To conclude the proof we first note that from above \[\langle \mathbbm{1}, v\rangle \geq m^2.\] On the other hand, 
 We also obtain 
 \[\|v\|_2^2 \leq 2m^2 + C(m+1)^{3/2}\;.\]
 Combining these results tells us that
 \[ \liminf_{m\to \infty}\frac{\inner{\mathbbm{1}}{v}}{\|v\|_2\cdot \|\mathbbm1\|_2}\geq \frac{1}{\sqrt{2}}\;.
 \]
\end{proof}
\section{Proof of Theorem~\ref{thm:strongerBound}}
Let $G$ be any graph with diameter $2$. Since $D_{ij}$ is either $1$ or $2$ (except for $D_{ii}=0$), it is easy to see that
    \begin{align*}
        \inner{\mathbbm 1}{v}-v_i
        \leq\lambda &v_i=\sum_{j=1}^n D_{i,j} v_j
        \leq 2(\inner{\mathbbm{1}}{v}-v_i).
    \end{align*}
    Rearranging, we obtain the uniform two-sided bound
\begin{align*}
\frac{\inner{\mathbbm 1}{v}}{\lambda+1}\leq  &v_i < 2\frac{\inner{\mathbbm{1}}{v}}{\lambda+1}.
\end{align*}
    This yields, in particular, that for all $1\leq i, j\leq n$
    \[1\leq  \frac{v_i}{v_j}\leq 2\;.\]

    This defines a convex region, that we denote by $D$. In order to prove our result, it suffices to prove that the minimum of $\|v\|_1=\inner{\mathbbm 1}{v}$ over the set $D$, subject to the constraint $\|v\|_2=1$, is at least $4/(3\sqrt{2})$. To this aim, we first notice that the minimizers of this problem are the same, up to a scalar factor, of the maximizers of $\|v_2\|_2$ in $D$ subject to $\|v\|_1=1$ (in fact, in both cases they must be minimizers of the homogeneous function $\|v\|_1/\|v\|_2$ on $D$). Since the latter is a maximization problem for a strictly convex function on a convex set, the maximizers must be extreme points of $D$. In particular, going back to the original formulation, we conclude that the smallest that
$\inner{\mathbbm 1}{v}$
    can be will be when all entries of $v$ are $c,2c$ for some $c$ so that $\|v\|_2=1$. 
Suppose now that we have $m$ entries equal to $c$ and $n-m$ equal to $2c$, then
    \begin{align*}
        1=\|v\|_2^2
        =\sum_{k=1}^m c^2+\sum_{k=m+1}^n(2c)^2
        =mc^2+(n-m)4c^2
    \end{align*}
    Then solving for $c$ we find
    \[c=\frac{1}{\sqrt{4n-3m}}\]
    So now we can optimize over $m$ to minimize the $\ell_1$ norm
    \[\frac{\|v\|_1}{\sqrt{n}}=\frac{mc+(n-m)2c}{\sqrt{n}}=\frac{2n-m}{\sqrt{n(4n-3m)}}\]
    Now treating $n$ as a constant and differentiating wrt to $m$ we get
    \begin{align*}
        \frac{d}{dm}\frac{2n-m}{\sqrt{n(4n-3m)}}
        =\frac{-\sqrt{4n^2-3mn}+\frac{3n(2n-m)}{2\sqrt{4n^2-3mn}}}{4n^2-3mn}
        =\frac{3mn-2n^2}{2(4n^2-3mn)^{\frac32}}
    \end{align*}
    If we want to set this equal to 0 we only care about the denominator so we solve
    \begin{align*}
        0&=3mn-2n^2\\
        0&=n(3m-2n)
    \end{align*}
    Which gives solutions $n=0,\frac{2n}{3}$ from which we see the latter is the minimum. Now if we substitute this into our formula for the $\ell_1$ norm we get
    \[\frac{2n-m}{\sqrt{n(4n-3m)}}=\frac{\frac{4n}{3}}{\sqrt{n(4n-2n)}}=\frac{4}{3}\cdot\frac{1}{\sqrt{2}}\]
    Now by \ref{diam} we know that if $G$ is a random graph, then for large $n$ it will have diameter 2 and this bound will hold.\\

\bibliographystyle{alpha}
\bibliography{reference}

\begin{thebibliography}{{OEI}23}

\bibitem[AH14]{an}
Mustapha Aouchiche and Pierre Hansen.
\newblock Distance spectra of graphs: a survey.
\newblock {\em Linear algebra and its applications}, 458:301--386, 2014.

\bibitem[BBG21]{balaji2021inverse}
R~Balaji, RB~Bapat, and Shivani Goel.
\newblock An inverse formula for the distance matrix of a wheel graph with an
  even number of vertices.
\newblock {\em Linear Algebra and its Applications}, 610:274--292, 2021.

\bibitem[BG22]{balaji2022inverse}
R~Balaji and Vinayak Gupta.
\newblock Inverse formula for distance matrices of gear graphs.
\newblock {\em arXiv preprint arXiv:2205.02133}, 2022.

\bibitem[BK19]{bap}
Ravindra~B Bapat and Hiroshi Kurata.
\newblock On cartesian product of euclidean distance matrices.
\newblock {\em Linear Algebra and its Applications}, 562:135--153, 2019.

\bibitem[BS11]{bapat2011inverse}
RB~Bapat and Sivaramakrishnan Sivasubramanian.
\newblock Inverse of the distance matrix of a block graph.
\newblock {\em Linear and Multilinear Algebra}, 59(12):1393--1397, 2011.

\bibitem[GP71]{graham}
Ronald~L Graham and Henry~O Pollak.
\newblock On the addressing problem for loop switching.
\newblock {\em The Bell system technical journal}, 50(8):2495--2519, 1971.

\bibitem[HLZ22]{hao2022inverse}
Chan Hao, Shuchao Li, and Licheng Zhang.
\newblock An inverse formula for the distance matrix of a fan graph.
\newblock {\em Linear and Multilinear Algebra}, 70(22):7807--7824, 2022.

\bibitem[HS16]{hou2016inverse}
Yaoping Hou and Yajing Sun.
\newblock Inverse of the distance matrix of a bi-block graph.
\newblock {\em Linear and Multilinear Algebra}, 64(8):1509--1517, 2016.

\bibitem[HY65]{hakimi1965distance}
S~Louis Hakimi and Stephen~S Yau.
\newblock Distance matrix of a graph and its realizability.
\newblock {\em Quarterly of applied mathematics}, 22(4):305--317, 1965.

\bibitem[IKH00]{imrich2000product}
Wilfried Imrich, Sandi Klav{\v{z}}ar, and Richard~H Hammack.
\newblock {\em Product graphs: structure and recognition}.
\newblock Wiley New York, 2000.

\bibitem[KL81]{klee_larman_1981}
Victor Klee and David Larman.
\newblock Diameters of random graphs.
\newblock {\em Canadian Journal of Mathematics}, 33(3):618–640, 1981.

\bibitem[Kom67]{komlos}
J{\'a}nos Koml{\'o}s.
\newblock On the determinant of (0-1) matrices.
\newblock {\em Studia Scientiarium Mathematicarum Hungarica}, 2:7--21, 1967.

\bibitem[Ngu12]{ng}
Hoi Nguyen.
\newblock {On the least singular value of random symmetric matrices}.
\newblock {\em Electronic Journal of Probability}, 17(none):1 -- 19, 2012.

\bibitem[{OEI}23]{OEIS}
{OEIS Foundation Inc.}
\newblock The {O}n-{L}ine {E}ncyclopedia of {I}nteger {S}equences, 2023.
\newblock Entry A354465, \url{http://oeis.org/A354465}.

\bibitem[Ste23a]{stein}
Stefan Steinerberger.
\newblock Curvature on graphs via equilibrium measures.
\newblock {\em Journal of Graph Theory}, 103(3):415--436, 2023.

\bibitem[Ste23b]{stein1}
Stefan Steinerberger.
\newblock The first eigenvector of a distance matrix is nearly constant.
\newblock {\em Discrete Mathematics}, 346(4):113291, 2023.

\bibitem[Tik20]{tikh}
Konstantin Tikhomirov.
\newblock Singularity of random bernoulli matrices.
\newblock {\em Annals of Mathematics}, 191(2):593--634, 2020.

\bibitem[Zho17]{zhou2017inverse}
Hui Zhou.
\newblock The inverse of the distance matrix of a distance well-defined graph.
\newblock {\em Linear Algebra and its Applications}, 517:11--29, 2017.

\end{thebibliography}

\end{document}